\def\fp{\mathcal{FP}}
\newcommand{\Z}{\mathbb{Z}}
\numberwithin{equation}{section}
\theoremstyle{plain}
\newtheorem{thm}[equation]{Theorem}
\newtheorem{cor}[equation]{Corollary}
\newtheorem{lem}[equation]{Lemma}
\newtheorem{prop}[equation]{Proposition}
\newcommand{\comment}[1]{}  %to comment out chunks of text
\theoremstyle{definition}
\newtheorem{defn}[equation]{Definition}
\theoremstyle{remark}
\newtheorem{rem}[equation]{Remark}
\newtheorem{ex}[equation]{Example}
\begin{document}

\bibliographystyle{plain}

\title{K-theory of $n$-coherent rings}
\author{Eugenia Ellis}
\email{eellis@fing.edu.uy}
\address{IMERL, Facultad de Ingenier\'\i a, Universidad de la Rep\'ublica \\
Julio Herrera y Reissig 565, 11.300, Montevideo, Uruguay}
\author{Rafael Parra}
\email{rparra@fing.edu.uy}
\address{IMERL, Facultad de Ingenier\'\i a, Universidad de la Rep\'ublica \\
Julio Herrera y Reissig 565, 11.300, Montevideo, Uruguay}

\thanks{The first author was partially supported by ANII and CSIC. Both authors were partially supported by PEDECIBA and by the grant ANII FCE-3-2018-1-148588. }

\keywords{K-theory, strong $n$-coherence, regularities properties.}

\subjclass[2010]{19D35, 16P70, 16E50}

\begin{abstract}
Let $R$ be a strong $n$-coherent ring such that each finitely $n$-presented $R$-module has finite projective dimension. We consider $\mathcal{FP}_{n}(R)$ the full subcategory of $R$-Mod of finitely $n$-presented modules. We prove that $\mathcal{FP}_{n}(R)$ is an exact category,  $K_{i}(R) = K_{i}(\mathcal{FP}_{n}(R))$  for every $i\geq 0$ and we obtain an expression of $\operatorname{Nil}_{i}(R)$.
\end{abstract}

\maketitle
%\tableofcontents
\section{Introduction}
Let $R$ be an associative ring with unit. We consider the following full subcategories of  the category of left $R$-modules denoted by $R$-$\operatorname{Mod}$: 
$$
\begin{array}{lll}
\mathcal{FP}_{0}(R)&=&\{ M \in \mbox{$R$-$\operatorname{Mod}$}: \mbox{ $M$ is finitely generated}\}\\
\mathcal{FP}_{1}(R)&=&\{ M \in \mbox{$R$-$\operatorname{Mod}$}: \mbox{ $M$ is finitely presented}\}\\
\mathcal{FP}_{n}(R)&=&\{ M \in \mbox{$R$-$\operatorname{Mod}$}: \mbox{ $M$ is finitely $n$-presented}\}\\
\mathcal{FP}_{\infty}(R)&=&\{ M \in \mbox{$R$-$\operatorname{Mod}$}: \mbox{ $M$ has a resolution by finitely generated free modules}\}\\
\operatorname{Proj}(R)&=&\{ M \in \mbox{$R$-$\operatorname{Mod}$}: \mbox{ $M$ is finitely generated and projective}\}.\\
\end{array}
$$
We show in Proposition \ref{excat} that $\mathcal{FP}_{n}(R)$ is an exact category for every $n\geq 0$. 
Observe there is a chain of embbeding of exact categories 
$$
\operatorname{Proj}(R)\subseteq \mathcal{FP}_{\infty}(R)\subseteq \mathcal{FP}_{n}(R)\subseteq \ldots \subseteq \mathcal{FP}_{1}(R) \subseteq \mathcal{FP}_{0}(R) \subseteq \mbox{$R$-$\operatorname{Mod}$}.
$$
If $R$ is Noetherian then $\mathcal{FP}_{0}(R)=\mathcal{FP}_{\infty}(R)$ is an abelian category. If $R$ is coherent then $\mathcal{FP}_{1}(R)=\mathcal{FP}_{\infty}(R)$ is an abelian category. In this work we consider rings $R$ such that $\mathcal{FP}_{n}(R)=\mathcal{FP}_{\infty}(R)$ which are called strong $n$-coherent rings. 

We say that $R$ is $n$-regular if the projective dimension of each finitely $n$-presented module is finite. We investigate which techniques used in \cite{Swan} for regular and coherent rings can be applied to $n$-regular and strong $n$-coherent rings. 
In this case we prove in Theorem \ref{fpnco1} that 
$$ K_{i}(R) \simeq K_{i}(\mathcal{FP}_{n}(R)) \qquad{\forall i \geq 0}.$$
The main tool for this result is the Quillen's Resolution Theorem given in \cite{Quillen} applied to the inclusion $\operatorname{Proj}(R)\hookrightarrow \mathcal{FP}_{n}(R)$.
\begin{thm}[Resolution Theorem]\cite[Th 2.1]{Gersten}\label{Qres}
Let $\mathcal{M}$ be an exact category and let $\mathcal{P}$ be a full subcategory closed under extensions in $\mathcal{M}$. Suppose in addition that: 
\begin{enumerate}
\item If $0\rightarrow M \rightarrow P \rightarrow P' \rightarrow 0$ is exact in $\mathcal{M}$ with $P$, $P'$ in $\mathcal{P}$, then $M$ is in $\mathcal{P}$.
\item For every $M$ in $\mathcal{M}$ there is a finite $\mathcal{P}$-resolution of $M$
$$
0\rightarrow P_{k} \rightarrow \ldots \rightarrow P_{1} \xrightarrow{d_{1}} P_{0} \xrightarrow{d_{0}}  M \rightarrow 0
$$
\end{enumerate}
Then the inclusion $\mathcal{P} \hookrightarrow \mathcal{M}$ induces isomorphisms 
$$
K_{m}(\mathcal{P}) \xrightarrow{\simeq} K_{m}(\mathcal{M})
$$
for all $m\geq 0$.
\end{thm}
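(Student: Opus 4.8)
The plan is to reproduce Quillen's proof of this statement (it is \cite[Th.~2.1]{Gersten}, following \cite{Quillen}) via the $Q$-construction. Recall that $K_{m}(\mathcal{A})=\pi_{m+1}(BQ\mathcal{A})$ for an exact category $\mathcal{A}$, so it suffices to prove that the inclusion $i\colon\mathcal{P}\hookrightarrow\mathcal{M}$ induces a weak homotopy equivalence $BQ\mathcal{P}\to BQ\mathcal{M}$. The argument has two parts: first reduce to the case in which every object of $\mathcal{M}$ has a $\mathcal{P}$-resolution of length $\le 1$, and then treat that case with Quillen's Theorem~A.

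\textbf{Step 1: reduction to resolutions of length $\le 1$.} For $n\ge 0$ let $\mathcal{P}_{n}\subseteq\mathcal{M}$ be the full subcategory of objects admitting a $\mathcal{P}$-resolution of length $\le n$, so $\mathcal{P}_{0}=\mathcal{P}$ and $\mathcal{P}_{0}\subseteq\mathcal{P}_{1}\subseteq\cdots$. A splicing argument shows each $\mathcal{P}_{n}$ is closed under extensions in $\mathcal{M}$, hence is an exact category with the induced exact structure, and hypothesis~(2) gives $\mathcal{M}=\bigcup_{n}\mathcal{P}_{n}$. Since every object and every morphism of $Q\mathcal{M}$ involves only finitely many objects of $\mathcal{M}$, we get $Q\mathcal{M}=\bigcup_{n}Q\mathcal{P}_{n}$, so $BQ\mathcal{M}$ is the filtered colimit of the subcomplexes $BQ\mathcal{P}_{n}$ and $\pi_{*}(BQ\mathcal{M})=\colim_{n}\pi_{*}(BQ\mathcal{P}_{n})$. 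Arguing by induction on $n$, it therefore suffices to show that $BQ\mathcal{P}_{n-1}\to BQ\mathcal{P}_{n}$ is a weak equivalence for each $n\ge 1$; and the pair $\mathcal{P}_{n-1}\subseteq\mathcal{P}_{n}$ again satisfies hypotheses~(1) and~(2), now with the extra feature that every object $M$ of $\mathcal{P}_{n}$ admits a $\mathcal{P}_{n-1}$-resolution of length $\le 1$: given an exact sequence $0\to M'\to P_{0}\to M\to 0$ with $P_{0}\in\mathcal{P}$, a generalized Schanuel argument comparing with a chosen length-$n$ resolution of $M$, together with condition~(1), forces $M'\in\mathcal{P}_{n-1}$. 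Hence we may assume from now on that every object $M$ of $\mathcal{M}$ fits into an exact sequence $0\to P_{1}\to P_{0}\to M\to 0$ with $P_{0},P_{1}\in\mathcal{P}$.

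\textbf{Step 2: the length $\le 1$ case via Theorem~A.} We must show $Qi\colon Q\mathcal{P}\to Q\mathcal{M}$ is a homotopy equivalence. By Quillen's Theorem~A it suffices to prove that, for each object $M$ of $\mathcal{M}$, the comma category $\mathcal{C}(M)$ whose objects are pairs $(P,\theta)$ with $P$ an object of $\mathcal{P}$ and $\theta\colon M\to P$ a morphism of $Q\mathcal{M}$ (an admissible layer $M\twoheadleftarrow N\hookrightarrow P$), and whose morphisms are the morphisms of $Q\mathcal{P}$ compatible with the $\theta$'s, is contractible. An admissible epimorphism $P_{0}\twoheadrightarrow M$ with $P_{0}\in\mathcal{P}$ determines such an object; let $\mathcal{R}(M)\subseteq\mathcal{C}(M)$ be the full subcategory on those objects coming from exact sequences $0\to P_{1}\to P_{0}\twoheadrightarrow M\to 0$ with $P_{0},P_{1}\in\mathcal{P}$, i.e. the length-$\le 1$ $\mathcal{P}$-resolutions of $M$. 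By Step~1 the category $\mathcal{R}(M)$ is nonempty, and it is closed under pullback: the pullback of $P_{0}\twoheadrightarrow M\twoheadleftarrow P_{0}'$ is an extension of $P_{0}'$ by $\ker(P_{0}\twoheadrightarrow M)\in\mathcal{P}$, hence lies in $\mathcal{P}$ by closure under extensions, while its kernel over $M$ is an extension of $\ker(P_{0}'\twoheadrightarrow M)\in\mathcal{P}$ by $\ker(P_{0}\twoheadrightarrow M)\in\mathcal{P}$. Together with the evident projections this makes $\mathcal{R}(M)$ filtered, so $B\mathcal{R}(M)$ is contractible. Finally, a deformation-retract argument — attaching to each $\theta\colon M\to P$ in $Q\mathcal{M}$ a canonical $\mathcal{P}$-resolution of $M$ built from the layer $M\twoheadleftarrow N\hookrightarrow P$ and a resolution of $N$, and constructing the natural transformations linking $(P,\theta)$ to it — identifies the inclusion $\mathcal{R}(M)\hookrightarrow\mathcal{C}(M)$ as a homotopy equivalence. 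Thus $\mathcal{C}(M)$ is contractible and Theorem~A gives the statement.

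\textbf{Main obstacle.} The technical heart is the deformation-retract step in Step~2: producing the canonical-resolution functor on $\mathcal{C}(M)$ and the homotopies exhibiting $\mathcal{R}(M)$ as a retract, while correctly bookkeeping admissible layers, the composition law of the $Q$-construction, and the pullbacks and pushouts of conflations that occur. A secondary point demanding care is the generalized Schanuel lemma underlying Step~1 — that syzygies over $\mathcal{P}$ computed from different finite resolutions of the same object of $\mathcal{M}$ differ only by objects of $\mathcal{P}$ — on which both the filtration $\mathcal{M}=\bigcup_{n}\mathcal{P}_{n}$ by exact subcategories and the descent of hypotheses~(1) and~(2) to each pair $(\mathcal{P}_{n-1},\mathcal{P}_{n})$ depend.
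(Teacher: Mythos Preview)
The paper does not prove this statement at all: Theorem~\ref{Qres} is quoted from \cite[Th~2.1]{Gersten} (originating in \cite{Quillen}) and used as a black box in the proofs of Theorem~\ref{fpnco1} and Proposition~\ref{losnil}. There is therefore no ``paper's own proof'' to compare your proposal against.

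For what it is worth, your sketch follows the standard Quillen strategy via the $Q$-construction, the filtration $\mathcal{M}=\bigcup_{n}\mathcal{P}_{n}$ by resolution length, and Theorem~A applied to the comma categories. The outline is correct and is essentially the argument in \cite{Quillen}. The part you yourself flag as the main obstacle---exhibiting $\mathcal{R}(M)\hookrightarrow\mathcal{C}(M)$ as a homotopy equivalence---is indeed where the real work lies, and your description there (``attaching a canonical $\mathcal{P}$-resolution \dots and constructing the natural transformations'') is too schematic to stand on its own; Quillen's actual argument is more explicit about the zig-zag of natural transformations involved. But since the present paper neither requires nor supplies any proof of this theorem, that gap is irrelevant to the comparison you were asked to make.
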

We can also apply this theorem to the inclusion of exact categories $\operatorname{Nil}(\operatorname{Proj}(R)) \hookrightarrow \operatorname{Nil}(\mathcal{FP}_{n}{(R)})$  which gives us a new expression of $\operatorname{Nil}_{i}(R)$, see  Proposition \ref{losnil}. If $n=1$ Swan in \cite{Swan} proves that the expression obtained is zero, then 
$$K_{i}(R[t])=K_{i}(R) \qquad \mbox{and}  \qquad  K_{i}(R[t,t^{-1}])=K_{i}(R)\oplus K_{i-1}(R)\qquad \mbox{for all $i\geq 1$.} $$ 
This can be done by applying Devissage Theorem to $\mathcal{FP}_{1}(R)\hookrightarrow \operatorname{Nil} (\mathcal{FP}_{1}(R))$ because $\mathcal{FP}_{1}(R)$ is abelian when $R$ is coherent. Let us recall the Quillen's Devissage Theorem:
\begin{thm}[Devissage Theorem] \cite[Th 2.1]{Gersten}\label{Qdev}
If $\mathcal{A}$ is an abelian category and $\mathcal{B}$ is a non-empty full subcategory closed under subobjects, quotient objects, and finite products in $\mathcal{A}$ and if every object $A$ in 
$\mathcal{A}$ has a finite filtration
$$
0=A_{0}\subset A_{1} \subset  \ldots \subset A_{k}=A 
$$
with $A_{i}/A_{i-1}$ in $\mathcal{B}$ for each $i$, then the inclusion $\mathcal{B} \hookrightarrow \mathcal{A}$ induces isomorphisms  
$$
K_{m}(\mathcal{B}) \xrightarrow{\simeq} K_{m}(\mathcal{A})
$$
for all $m\geq 0$.
\end{thm}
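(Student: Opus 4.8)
This is Quillen's Devissage Theorem, and the plan is to follow the argument through the $Q$-construction and Quillen's Theorem~A. First note that a full subcategory $\mathcal{B}$ of an abelian category $\mathcal{A}$ which is closed under subobjects, quotient objects and finite products is itself abelian, and that the inclusion $i\colon\mathcal{B}\hookrightarrow\mathcal{A}$ is exact: kernels and cokernels of maps between objects of $\mathcal{B}$ are computed in $\mathcal{A}$, and $\mathcal{B}$ contains the zero object and all finite biproducts of its objects. Since $K_m(\mathcal{C})=\pi_{m+1}(BQ\mathcal{C})$ for an exact category $\mathcal{C}$, it suffices to prove that $BQi\colon BQ\mathcal{B}\to BQ\mathcal{A}$ is a homotopy equivalence. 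Observe also that $Qi$ is fully faithful, since every layer $L/K$ (with $K\subseteq L$) of an object of $\mathcal{B}$ again lies in $\mathcal{B}$.

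By Theorem~A it is enough to prove that the comma category $Qi/M$ is contractible for every object $M$ of $\mathcal{A}$. Using that $Qi$ is fully faithful, $Qi/M$ is identified with the poset $\mathcal{C}_M$ whose objects are the $\mathcal{B}$-layers of $M$, namely pairs of subobjects $(K,L)$ with $K\subseteq L\subseteq M$ and $L/K\in\mathcal{B}$, ordered by $(K_1,L_1)\le(K_2,L_2)$ iff $K_2\subseteq K_1$ and $L_1\subseteq L_2$ (a morphism between two layers in $Q\mathcal{A}$ being exactly a refinement of this type). If $M\in\mathcal{B}$ then $\mathcal{C}_M$ has the terminal object $(0,M)$ and is therefore contractible; this covers the case in which the least length $\ell(M)$ of a $\mathcal{B}$-filtration of $M$ is $\le 1$ (which is precisely the case $M\in\mathcal{B}$), and serves as the base of an induction on $\ell(M)$.

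For the inductive step take $\ell=\ell(M)\ge 2$, fix a $\mathcal{B}$-filtration $0=M_0\subset M_1\subset\cdots\subset M_\ell=M$, and set $M'=M_{\ell-1}$. Then $\ell(M')\le\ell-1$, so $\mathcal{C}_{M'}$ is contractible by induction, and $M/M'\in\mathcal{B}$; it thus suffices to produce a homotopy equivalence $\mathcal{C}_M\to\mathcal{C}_{M'}$. The natural candidate is the restriction functor $r(K,L)=(K\cap M',\,L\cap M')$, which is well defined because $(L\cap M')/(K\cap M')$ embeds in $L/K\in\mathcal{B}$, is order preserving, and restricts to the identity on $\mathcal{C}_{M'}$. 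One checks that $r$ is a homotopy equivalence by applying Theorem~A (in its dual form) to $r$: the comma category $(K_0,L_0)\backslash r$ is $\{(K,L)\in\mathcal{C}_M:K\cap M'\subseteq K_0,\ L_0\subseteq L\}$, and the point is to show each of these is contractible, deforming it onto a contractible subcategory using that $M/M'\in\mathcal{B}$, that $\mathcal{B}$ is closed under sums and quotients, and the existence of the filtration.

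I expect this last verification to be the main obstacle. Already for a filtration of length $2$ the poset $\mathcal{C}_M$ need not be directed — two $\mathcal{B}$-layers of $M$ may admit no common $\mathcal{B}$-layer refining them, because $\mathcal{B}$ is \emph{not} assumed closed under extensions — so the contractibility of $\mathcal{C}_M$ cannot be obtained from a cheap cofinality argument or an obvious deformation retraction $\mathcal{C}_M\to\mathcal{C}_{M'}$, and one is pushed into a genuine homotopy-theoretic argument. Carrying out the comma-category analysis of $r$, tracking morphisms of $Q\mathcal{A}$ through the inductive step and isolating where the devissage hypothesis (rather than just the closure properties of $\mathcal{B}$) is used, is where the real content sits; the remaining steps — the reduction to $BQi$, the appeal to Theorem~A, the identification of $Qi/M$ with $\mathcal{C}_M$, and the induction on $\ell(M)$ — are formal.
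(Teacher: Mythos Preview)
The paper does not prove this theorem; it merely quotes it from the literature (Quillen, via Gersten) as a tool to be applied later. So there is no proof in the paper to compare against.

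Your sketch follows Quillen's original line via the $Q$-construction, Theorem~A, and the poset $\mathcal{C}_M$ of $\mathcal{B}$-layers of $M$, and that is the correct route. However, you stop exactly at the point where the argument has content, and you overcomplicate the remaining step: there is no need to apply Theorem~A a second time to $r$. A direct zigzag of natural transformations finishes it. With $M'\subseteq M$ and $M/M'\in\mathcal{B}$, define $s\colon \mathcal{C}_M\to\mathcal{C}_M$ by $s(K,L)=(K\cap M',\,L)$. Then in the partial order on $\mathcal{C}_M$ one has $(K,L)\le s(K,L)$ and $(i\circ r)(K,L)=(K\cap M',\,L\cap M')\le s(K,L)$, so there are natural transformations $\mathrm{id}\Rightarrow s\Leftarrow i\circ r$; hence $i\circ r\simeq\mathrm{id}$ on classifying spaces, and together with $r\circ i=\mathrm{id}$ this gives the desired homotopy equivalence $\mathcal{C}_{M'}\simeq\mathcal{C}_M$.

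The only point to verify is that $s$ actually lands in $\mathcal{C}_M$, i.e.\ that $L/(K\cap M')\in\mathcal{B}$. This is precisely where closure under finite products is used, and it dissolves the difficulty you flag about $\mathcal{B}$ not being closed under extensions: the map $L\to (L/K)\times(M/M')$ induced by the two quotient maps has kernel $K\cap M'$, so $L/(K\cap M')$ embeds into $(L/K)\times(M/M')$, which lies in $\mathcal{B}$ by closure under products, and therefore $L/(K\cap M')\in\mathcal{B}$ by closure under subobjects. Once you see this product trick, the ``main obstacle'' you anticipate disappears and the induction on the length of a $\mathcal{B}$-filtration goes through immediately.
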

Can we apply Devissage Theorem if $n>1$? We can do it if $\mathcal{FP}_{n}(R)$ is abelian but in Proposition \ref{fpab} we prove this only happens when $R$ is coherent. 
The property which arises in $\mathcal{FP}_{n}(R)$ when $R$ is strong $n$-coherent is thickness, $R$ is strong $n$-coherent if and only if $\mathcal{FP}_{n}(R)$ is a thick category.
 
 In Section \ref{sec2} we state the definitions of strong $n$-coherent and $n$-regular ring. We compare this regularity with the $n$-Von Neumann regularity. We show that $\operatorname{Proj}(R)=\fp_{n}(R)$ when $R$ is $n$-Von Neumann regular but this does not happen necessarily  when $R$ is a strong $n$-coherent and $n$-regular ring.  In  Section \ref{sec3} we prove that $\operatorname{Proj}(R)  \hookrightarrow  \fp_{n}(R)$ induces isomorphisms $K_{i}(R)\simeq K_{i}(\fp_{n}(R))$ for all $i\geq 0$ when $R$ is strong $n$-coherent and $n$-regular. In Section \ref{sec4} we generalize \cite[Lemma 6.3]{Swan} in order to apply Resolution Theorem to $\operatorname{Nil}(\operatorname{Proj}(R)) \hookrightarrow \operatorname{Nil}(\fp_{n}(R))$.
 This shows that $\operatorname{Nil}_{i}(R)$ coincides with $\operatorname{Nil}^{n}_{i}(R)$ which is the cokernel of the map $$K_{i}(\fp_{n}(R))\rightarrow K_{i}(\operatorname{Nil}(\fp_{n}(R)))$$
induced by inclusion. In Section \ref{sec5} we prove that we can use Devissage Theorem to prove $\operatorname{Nil}^{n}(R)=0$ if and only if $R$ is coherent, because $\fp_{n}(R)$ is abelian only when $R$ is coherent. We point out that the non coherency of $R$ doesn't imply anything about $\operatorname{Nil}^{n}(R)$.  In the last section we present an application of Theorem \ref{fpnco1} in the particular cases when $R$ is an arithmetic  or a valuation ring.

The projective dimension of an $R$-module $M$ is denoted by $pd(M)$,  the weak dimension of $M$ is denoted by $wd(M)$ and $w.dim(R)$ is the weak dimension of $R$. The expression $R$-module means left $R$-module. 

We would like to thank to Emanuel Rodriguez-Cirone and Maru Sarazola for interesting comments and discussions.   We also thank the referee for his/her comments and corrections.

\section{$n$-coherent modules and rings}\label{sec2}
Let $n\geq0$ be a integer. Recall that an $R$-module $M$ is {\emph{finitely $n$-presented}} if there exists an exact sequence
$$
F_{n}\rightarrow F_{n-1}\rightarrow F_{n-2}\rightarrow \ldots \rightarrow F_{0}\rightarrow M \rightarrow 0
$$
where $F_{i}$ is a finitely generated and free (or projective) module, for every ${0}\leq{i}\leq{n.}$ Following \cite{bp}, we denote by $\fp_{n}(R)$ to the full subcategory of finitely $n$-presented modules. Note that $\fp_{0}(R)$ is the category of finitely generated modules and $\fp_{1}(R)$ is the category of finitely presented modules. Consider the {\emph{$\lambda$-dimension}} of $M$ $$\lambda_{R}(M)=\sup \{ n\geq0: \mbox{$M$ is finitely $n$-presented}\}.$$ If $M$ is not finitely generated we set $\lambda_{R}(M)=-1$. The category formed by modules that posses a resolution by finitely generated free (or projective) modules is $\fp_{\infty}(R)$.  We say that $M$ is \emph{finitely $\infty$-presented} if $M\in\fp_{\infty}(R)$.
We inmediately observe the following  chain of inclusions: 
$$
\fp_{\infty}(R)= \displaystyle \bigcap_{n\geq 0}\fp_{n}(R)\subseteq \ldots \subseteq \fp_{n+1}(R)\subseteq \fp_{n}(R)\subseteq \ldots\fp_{1}(R)\subseteq \fp_{0}(R).
$$ 
The finitely $\infty$-presented modules can be found in the literature as pseudo coherent modules,  see \cite[Example 7.1.4, Ch 2, \textsection 7]{Wei}. 
 The concept of $\lambda$-dimension and the finitely $n$-presented modules are related as follows.
\begin{lem}\label{lemares}\cite[Remark 1.5]{bp} For every $n\geq0$.
\begin{enumerate}
\item \label{lema1} $M\in \fp_{n}(R)$ if and only if $\lambda_{R}(M)\geq{n}.$
\item \label{lema2} $M\in \fp_{n}(R)\backslash \fp_{n+1}(R)$ if and only if $\lambda_{R}(M)=n.$
\item \label{lema3} $M\in \fp_{\infty}(R)$ if and only if $\lambda_{R}(M)={\infty}.$
\end{enumerate}
\end{lem}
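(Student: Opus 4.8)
The plan is to deduce all three statements from a single structural fact — that the subcategories $\fp_{k}(R)$ form a decreasing chain — together with the definition of $\lambda_{R}$ as a supremum. First I would record the basic monotonicity: if $M\in\fp_{k}(R)$ and $0\le j\le k$, then $M\in\fp_{j}(R)$, obtained by simply truncating a resolution $F_{k}\to\cdots\to F_{0}\to M\to 0$ at stage $j$. Consequently, for a finitely generated module $M$ the set $S_{M}=\{k\ge 0 : M\in\fp_{k}(R)\}$ is a nonempty initial segment of $\mathbb{N}$ (either all of $\mathbb{N}$, or $\{0,1,\dots,m\}$ for some $m$), so its supremum $\lambda_{R}(M)$ is either $\infty$ or the maximum of $S_{M}$; in particular, when $\lambda_{R}(M)$ is finite it is attained, i.e.\ $M\in\fp_{\lambda_{R}(M)}(R)$.

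For (\ref{lema1}), I would argue as follows. If $M\in\fp_{n}(R)$ with $n\ge 0$, then $M\in\fp_{0}(R)$, so $M$ is finitely generated (ruling out the case $\lambda_{R}(M)=-1$), and $n\in S_{M}$ gives $\lambda_{R}(M)=\sup S_{M}\ge n$. Conversely, if $\lambda_{R}(M)\ge n\ge 0$ then $M$ is finitely generated, so $S_{M}$ is a nonempty initial segment of $\mathbb{N}$ with $\sup S_{M}\ge n$; an initial segment of $\mathbb{N}$ whose supremum is $\ge n$ necessarily contains $n$, so $M\in\fp_{n}(R)$.

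Statements (\ref{lema2}) and (\ref{lema3}) then fall out formally. For (\ref{lema2}): by part (\ref{lema1}), $M\in\fp_{n}(R)\iff\lambda_{R}(M)\ge n$ and $M\notin\fp_{n+1}(R)\iff\lambda_{R}(M)<n+1\iff\lambda_{R}(M)\le n$; intersecting the two conditions gives $\lambda_{R}(M)=n$, and conversely this equality gives both. For (\ref{lema3}): using the identity $\fp_{\infty}(R)=\bigcap_{k\ge 0}\fp_{k}(R)$ already recorded in the excerpt, $M\in\fp_{\infty}(R)$ means $M\in\fp_{k}(R)$ for all $k$, which by (\ref{lema1}) means $\lambda_{R}(M)\ge k$ for all $k$, i.e.\ $\lambda_{R}(M)=\infty$; these equivalences run in both directions.

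I expect no genuine obstacle here: the argument is a routine unwinding of the definition of $\lambda_{R}$ once the monotonicity $\fp_{k+1}(R)\subseteq\fp_{k}(R)$ and the identity $\fp_{\infty}(R)=\bigcap_{k}\fp_{k}(R)$ are in place. The only point that deserves a moment's care is that a finite supremum of the integer set $S_{M}$ is actually attained — immediate, since every nonempty bounded subset of $\mathbb{N}$ has a maximum — and this is precisely what licenses passing back from $\lambda_{R}(M)\ge n$ to $M\in\fp_{n}(R)$ in the converse direction of (\ref{lema1}).
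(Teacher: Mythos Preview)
Your argument is correct and complete: the monotonicity $\fp_{k+1}(R)\subseteq\fp_{k}(R)$, the initial-segment description of $S_{M}$, and the identity $\fp_{\infty}(R)=\bigcap_{k\ge 0}\fp_{k}(R)$ are exactly what is needed, and you handle the one subtle point (that a finite supremum of $S_{M}$ is attained) cleanly.

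As for comparison with the paper: the paper does not give its own proof of this lemma at all. It simply records the statement with a citation to \cite[Remark~1.5]{bp} and moves on. So your proposal supplies a self-contained justification where the paper defers to the literature; there is no alternative argument in the paper to compare against. (Note also that the paper reuses the label \texttt{lemares} for a later, unrelated lemma in Section~\ref{sec4}; the proof environment appearing there concerns $\mathcal{N}$-resolutions and has nothing to do with the present statement.)
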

The \emph{$\lambda$-dimension of a ring} $R$, denoted by $\lambda(R)$,  was formulated by Vasconcelos in \cite{Vas} in order to study the power series and  polynomial rings. The $\lambda$-dimension of $R$ is the greatest integer $n$ (or ${\infty}$ if none such exits) such that $\lambda_{R}(M)\geq{n}$ implies  $\lambda_{R}(M)={\infty}.$ Recall that R is Noetherian ring if and only if  $\lambda(R)=0$ and R is coherent if and only if $\lambda(R)\leq{1}$.  

As in \cite{ncoh} we  extend the concept of coherent module:

\begin{defn}[\emph{$n$-coherent module}]
Let $n\geq0$ be a integer. An $R$-module $M$ is called $n$-coherent if:
\begin{itemize}
\item $M$ belongs to the category $\fp_{n}(R).$
\item For each submodule $N\hookrightarrow M$ such that $N$ is $(n-1)$-presented then $N$ is also  $n$-presented. 
\end{itemize}
\end{defn}
Denote by \mbox{$n$-$\operatorname{Coh}(R)$} to the collection of all $n$-coherent $R$-modules. Note that if $n=1$ the definition is the same as coherent module.
 
 \begin{rem} A ring $R$ is {\emph{$n$-coherent}} if it is $n$-coherent as an $R$-module with the regular action, (i.e. if each $(n-1)$-presented ideal of $R$ is $n$-presented). We say that $R$ is {\emph{strong $n$-coherent}}  if each $n$-presented $R$-module is $(n+1)$-presented. The idea of $n$-coherent rings has been stated in the literature by D. L. Costa \cite{Costa} but this terminology is not the same as in \cite{ncoh}. A strong $n$-coherence ring is equivalent to a $n$-coherence ring for $n=1$, but it is an open question for $n\geq2$. A coherent ring is a 1-coherent ring (1-strong coherent ring) and 0-strong coherent ring is a Noetherian ring.
\end{rem}

In our definition, $R$ is a strong $n$-coherent ring if and only if  $\lambda(R)\leq{n}$. Denote by $n$-$\operatorname{SCoh}$ the class of all strong $n$-coherent rings. We observe the following chain of inclusions: 
$$
\mbox{$0$-$\operatorname{SCoh}$}\subset \mbox{$1$-$\operatorname{SCoh}$}\subset \mbox{$2$-$\operatorname{SCoh}$}\subset\ldots \subset\mbox{$n$-$\operatorname{SCoh}$}\subset\ldots 
$$

\begin{ex}\cite[Example 1.3]{bp} \label{ejemplo} Let $k$ be a field. Consider the following ring  
$$R=\dfrac{k\left[{x_{1},x_{2},x_{3},\ldots} \right] }{(x_{i}x_{j})_{{i,j}\geq1}}.$$
In \cite{bp} it is shown that $\fp_{2}(R)=\fp_{\infty}(R)$, $(x_{1})\in \fp_{0}(R)\backslash \fp_{1}(R)$ and $R/(x_{1}) \in \fp_{1}(R)\backslash \fp_{2}(R)$.   
We obtain strict inclusions 
$$\fp_{\infty}(R)= \fp_{2}(R) \subset\fp_{1}(R)\subset\fp_{0}(R)$$ which show that $R$ is strong $2$-coherent but is it not coherent. 
\end{ex}

It is well known that $R$ is a coherent ring if and only if the category of coherent modules ($1$-$\operatorname{Coh}(R))$ coincides with the category of finitely presented modules ($\fp_{1}(R)$). 

\begin{prop}\label{fpnco5} Let $n\geq1.$ If $R$ is a strong $n$-coherent ring then
$$
\fp_{n}(R)=\mbox{$n$-$\operatorname{Coh(R)}$. }
$$
\end{prop}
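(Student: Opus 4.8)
The plan is to prove the two inclusions $\fp_n(R) \subseteq n\text{-}\operatorname{Coh}(R)$ and $n\text{-}\operatorname{Coh}(R) \subseteq \fp_n(R)$ separately, using the characterization of strong $n$-coherence as $\lambda(R) \leq n$ (equivalently: every finitely $n$-presented module is finitely $\infty$-presented, so $\fp_n(R) = \fp_\infty(R)$). The inclusion $n\text{-}\operatorname{Coh}(R) \subseteq \fp_n(R)$ is immediate from the definition of $n$-coherent module, since the first bullet of that definition requires $M \in \fp_n(R)$; this direction needs no hypothesis on $R$.

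For the reverse inclusion, let $M \in \fp_n(R)$. Since $R$ is strong $n$-coherent, $\lambda(R) \leq n$, so by Lemma \ref{lemares}\eqref{lema3} we have $M \in \fp_\infty(R)$, i.e. $\lambda_R(M) = \infty$. To show $M$ is $n$-coherent, take any submodule $N \hookrightarrow M$ with $N \in \fp_{n-1}(R)$; we must show $N \in \fp_n(R)$. By Lemma \ref{lemares}\eqref{lema1} this amounts to showing $\lambda_R(N) \geq n$. The key point is that $N$ is already finitely $(n-1)$-presented, so $\lambda_R(N) \geq n-1 \geq 0$; in particular $N$ is finitely generated, hence $\lambda_R(N) \neq -1$. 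Now I would invoke the definition of $\lambda(R)$: since $\lambda_R(N) \geq n - 1$, and... here one needs a bit more care, because $\lambda(R) \leq n$ only guarantees that $\lambda_R(N) \geq n$ forces $\lambda_R(N) = \infty$, not that $\lambda_R(N) \geq n-1$ forces anything. So the argument must instead produce directly one more step of a finitely generated free presentation of $N$.

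The crux is therefore the following: given $N \in \fp_{n-1}(R)$ embedded in $M \in \fp_\infty(R)$, build an exact sequence $F_n \to F_{n-1} \to \cdots \to F_0 \to N \to 0$ with all $F_i$ finitely generated free. Take a finitely generated free presentation $F_{n-1} \to \cdots \to F_0 \to N \to 0$ of length $n-1$ (available since $\lambda_R(N) \geq n-1$), and let $K = \ker(F_{n-1} \to F_{n-2})$ (with the convention $F_{-1} = N$); it suffices to show $K$ is finitely generated. The standard device here (as in the theory of coherent rings, generalized to the $n$-coherent setting) is a Schanuel-type comparison: combine the chosen presentation of $N$ with a finitely generated free resolution of $M$ — which exists since $M \in \fp_\infty(R)$ — and use the short exact sequence $0 \to N \to M \to M/N \to 0$ together with the fact that $M/N$, being a quotient of the finitely presented-ish data, lies in a suitable $\fp_k(R)$. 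I expect the main obstacle to be exactly this finite-generation step for the $n$-th syzygy of $N$: one must chase the exact sequences carefully and apply Lemma \ref{lemares} (or the horseshoe/Schanuel lemmas for $\lambda$-dimension from \cite{bp}) to conclude that the relevant syzygy module is finitely generated, and hence that $\lambda_R(N) \geq n$. Once that is in hand, Lemma \ref{lemares}\eqref{lema1} gives $N \in \fp_n(R)$, completing the verification that $M$ is $n$-coherent.
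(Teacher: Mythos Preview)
Your proposal leaves the decisive step unfinished. You correctly isolate the problem --- given $M\in\fp_\infty(R)$ and a submodule $N\in\fp_{n-1}(R)$, show $N\in\fp_n(R)$ --- but the Schanuel/horseshoe gesture at the end is not a proof, and the syzygy chase you sketch is harder than necessary. The clean way to close the gap uses the short exact sequence $0\to N\to M\to M/N\to 0$ and the standard $\lambda$-dimension inequalities (see \cite{bp}): from $\lambda_R(M)=\infty$ and $\lambda_R(N)\ge n-1$ one gets $\lambda_R(M/N)\ge\min(\lambda_R(N)+1,\lambda_R(M))\ge n$, so $M/N\in\fp_n(R)$; strong $n$-coherence then promotes this to $M/N\in\fp_\infty(R)$; and thickness of $\fp_\infty(R)$ (two of three terms in the sequence lie there) forces $N\in\fp_\infty(R)\subseteq\fp_n(R)$. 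That is the missing idea your outline was circling.

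The paper's proof takes a different and shorter route. Rather than verify the second bullet of the definition of $n$-coherent module directly, it chooses a short exact sequence $0\to K\to R^k\to M\to 0$ with $K\in\fp_{n-1}(R)$ (available by \cite[Proposition~1.1]{Zhu}), observes that strong $n$-coherence of $R$ makes $R^k$ an $n$-coherent module (\cite[Remark~3.5]{ncoh}), and then invokes a closure property of the class $n\text{-}\mathrm{Coh}(R)$ (\cite[Theorem~2.3]{ncoh}) to conclude that $M$ is $n$-coherent. This is faster because it outsources the work to existing structural results about $n$-coherent modules; your approach, once completed as above, is more self-contained and actually yields a slightly stronger conclusion, namely that every $(n-1)$-presented submodule of $M$ is in fact $\infty$-presented.
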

\begin{proof}
Suppose $M\in\fp_{n}(R)$. Then from  \cite[Proposition 1.1]{Zhu} we have a short exact sequence with $K\in\fp_{n-1}(R)$.
$$
0\rightarrow K \rightarrow R^{k}\rightarrow M \rightarrow 0
$$
From our hypothesis we get $R^{k}$ is a $n$-coherent module by \cite[Remark 3.5]{ncoh} and therefore $M\in\mbox{$n$-$\operatorname{Coh(R)}$}$  by \cite[Theorem 2.3]{ncoh}.
\end{proof}

\begin{cor}\cite[Corollary 2.7]{Swan} If $R$ is a coherent ring then $\fp_{1}(R)=\mbox{$1$-$\operatorname{Coh(R)}$.}$
\end{cor}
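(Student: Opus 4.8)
The plan is to derive this corollary as the special case $n=1$ of Proposition \ref{fpnco5}. First I would observe that a coherent ring is precisely a strong $1$-coherent ring: this is recorded in the discussion following the definition of strong $n$-coherence, where it is noted that "a strong $n$-coherence ring is equivalent to an $n$-coherence ring for $n=1$" and that "a coherent ring is a $1$-coherent ring ($1$-strong coherent ring)." Thus, if $R$ is coherent, then $R \in 1\text{-}\operatorname{SCoh}$, and the hypothesis of Proposition \ref{fpnco5} is satisfied for $n=1$.

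Next I would simply invoke Proposition \ref{fpnco5} with $n=1$, which yields $\fp_{1}(R) = 1\text{-}\operatorname{Coh}(R)$. Since $\fp_{1}(R)$ is, by definition, the category of finitely presented $R$-modules, and $1\text{-}\operatorname{Coh}(R)$ is the category of coherent modules (the remark after the definition of $n$-coherent module notes that for $n=1$ the definition coincides with the classical notion of a coherent module), this is exactly the asserted equality. One should also note for completeness that the reverse inclusion $1\text{-}\operatorname{Coh}(R) \subseteq \fp_{1}(R)$ holds unconditionally, directly from the definition of $n$-coherent module (the first bullet requires $M \in \fp_{n}(R)$), so the content of the corollary is entirely in the inclusion $\fp_{1}(R) \subseteq 1\text{-}\operatorname{Coh}(R)$ supplied by the proposition.

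There is essentially no obstacle here: the corollary is a direct translation of Proposition \ref{fpnco5} into the classical ($n=1$) terminology, relying only on the standard facts that "coherent ring" $=$ "strong $1$-coherent ring" and "coherent module" $=$ "$1$-coherent module." The one point worth a line of care is confirming that these classical identifications are the ones in force in this paper, which the earlier remarks make explicit. Accordingly the proof will be a single sentence: apply Proposition \ref{fpnco5} with $n=1$, using that a coherent ring is strong $1$-coherent.
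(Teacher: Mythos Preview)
Your proposal is correct and matches the paper's approach exactly: the corollary is stated immediately after Proposition~\ref{fpnco5} with no separate proof, because it is nothing more than the case $n=1$ of that proposition together with the identification ``coherent ring'' $=$ ``strong $1$-coherent ring'' already recorded in the paper.
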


The category of $\fp_{\infty}(R)$ is thick, see \cite[Theorem 1.8]{bp}. That means it is closed under taking direct summands and for every short exact sequence 
$$
0\rightarrow A\rightarrow B\rightarrow C \rightarrow 0
$$
with two of the three terms $A$, $B$, $C$ in $\fp_{\infty}(R)$ so is the third. 
Because of \cite[Theorem 2.4]{bp}, $R$ is strong $n$-coherent if and only if  $\fp_{n}(R) =\fp_{\infty}(R)$.
\begin{prop}\cite[Corollary 2.6]{bp} 
 Let $n\geq1$ then $R$ is a strong $n$-coherent ring if, and only if, the category $\fp_{n}(R)$ is closed under taking kernels of epimorphims.
\end{prop}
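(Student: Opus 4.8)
The plan is to reduce everything to two facts about $\fp_{\infty}(R)$ already recorded above: that $R$ is strong $n$-coherent if and only if $\fp_{n}(R)=\fp_{\infty}(R)$ (\cite[Theorem 2.4]{bp}), and that $\fp_{\infty}(R)$ is thick (\cite[Theorem 1.8]{bp}); I will also use the definition in the form ``$R$ is strong $n$-coherent $\iff\fp_{n}(R)\subseteq\fp_{n+1}(R)$''. For the forward implication, assume $R$ is strong $n$-coherent, so $\fp_{n}(R)=\fp_{\infty}(R)$, and let $f\colon M\onto N$ be a surjection with $M,N\in\fp_{n}(R)$. The short exact sequence $0\to\ker f\to M\to N\to 0$ has two of its three terms (namely $M$ and $N$) in $\fp_{\infty}(R)$; since $\fp_{\infty}(R)$ is thick it is closed under the two-out-of-three property for short exact sequences, whence $\ker f\in\fp_{\infty}(R)=\fp_{n}(R)$. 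Thus $\fp_{n}(R)$ is closed under kernels of epimorphisms.

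For the converse I would show directly that $\fp_{n}(R)\subseteq\fp_{n+1}(R)$. Given $M\in\fp_{n}(R)$, since $M$ is finitely generated I choose a surjection $R^{k}\onto M$ and set $K=\ker(R^{k}\to M)$. Now $R^{k}\in\fp_{\infty}(R)\subseteq\fp_{n}(R)$ and $M\in\fp_{n}(R)$, so $R^{k}\onto M$ is an epimorphism between objects of $\fp_{n}(R)$, and the hypothesis gives $K\in\fp_{n}(R)$. As $n\ge 1$, $K$ is in particular finitely generated, so it admits an exact sequence $F_{n+1}\to\cdots\to F_{1}\to K\to 0$ with each $F_{i}$ finitely generated free. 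Splicing this with $0\to K\to R^{k}\to M\to 0$, with $F_{0}=R^{k}$ and $d_{1}\colon F_{1}\to F_{0}$ the composite $F_{1}\to K\hookrightarrow F_{0}$, produces an exact sequence $F_{n+1}\to\cdots\to F_{1}\to F_{0}\to M\to 0$ of finitely generated free modules, so $M\in\fp_{n+1}(R)$. (Alternatively, once $K\in\fp_{n}(R)$ one can invoke \cite[Proposition 1.1]{Zhu} to obtain $M\in\fp_{n+1}(R)$ at once.) Hence $\fp_{n}(R)\subseteq\fp_{n+1}(R)$, i.e.\ $R$ is strong $n$-coherent.

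I expect the whole argument to be short. The only real idea is in the converse: closure under kernels of epimorphisms lets one pull the first syzygy of an object of $\fp_{n}(R)$ back into $\fp_{n}(R)$, which is precisely what upgrades an $n$-presentation to an $(n+1)$-presentation; iterating the same step in fact shows $\fp_{n}(R)=\fp_{\infty}(R)$ directly. There is no serious obstacle beyond checking that $R^{k}\onto M$ is genuinely an epimorphism between objects of $\fp_{n}(R)$ and that the spliced complex is exact, both of which are routine.
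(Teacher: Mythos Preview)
The paper does not supply its own proof of this proposition; it is simply quoted as \cite[Corollary 2.6]{bp} and used as input for what follows. So there is nothing to compare against directly.

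That said, your argument is correct and fits naturally into the paper: both ingredients you use---that $\fp_{\infty}(R)$ is thick (\cite[Theorem 1.8]{bp}) and that strong $n$-coherence is equivalent to $\fp_{n}(R)=\fp_{\infty}(R)$ (\cite[Theorem 2.4]{bp})---are precisely the facts recorded in the paragraph immediately preceding the proposition. The forward implication is then immediate from thickness, and your converse (pulling the first syzygy $K=\ker(R^{k}\to M)$ back into $\fp_{n}(R)$ and splicing an $n$-presentation of $K$ onto $0\to K\to R^{k}\to M\to 0$ to obtain an $(n+1)$-presentation of $M$) is exactly the standard mechanism behind \cite[Proposition 1.1]{Zhu}, which you also mention as an alternative. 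One minor wording issue: the sentence ``As $n\ge 1$, $K$ is in particular finitely generated, so it admits an exact sequence $F_{n+1}\to\cdots\to F_{1}\to K\to 0$'' is slightly misleading---the exact sequence of length $n+1$ comes from $K\in\fp_{n}(R)$, not merely from $K$ being finitely generated---but the intended logic is clear and correct.
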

 
\begin{cor}\label{cc1} Let $n\geq1.$ If $R$ is a strong $n$-coherent ring then $n$-$\operatorname{Coh(R)}$ is closed under direct summands, extensions and taking cokernels of monomorphisms and kernels of epimorphism.
\end{cor}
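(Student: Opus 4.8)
The plan is to deduce Corollary \ref{cc1} from Proposition \ref{fpnco5} together with the structural results already recorded for $\fp_n(R)$. Since $R$ is strong $n$-coherent, Proposition \ref{fpnco5} identifies $n$-$\operatorname{Coh}(R)$ with $\fp_n(R)$, so it suffices to establish the four closure properties for $\fp_n(R)$. Two of these are essentially immediate from the excerpt: $\fp_n(R)=\fp_\infty(R)$ by \cite[Theorem 2.4]{bp}, and $\fp_\infty(R)$ is thick by \cite[Theorem 1.8]{bp}; thickness gives closure under direct summands and, for a short exact sequence with two terms in the category, membership of the third—which covers both extensions ($A,C$ in, hence $B$ in) and cokernels of monomorphisms ($A,B$ in, hence $C$ in). Closure under kernels of epimorphisms is exactly the content of \cite[Corollary 2.6]{bp} as restated in the Proposition immediately preceding this corollary.

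Concretely, I would proceed as follows. First, invoke Proposition \ref{fpnco5} to replace $n$-$\operatorname{Coh}(R)$ by $\fp_n(R)$ throughout. Second, cite $R$ strong $n$-coherent $\Leftrightarrow \fp_n(R)=\fp_\infty(R)$ and the thickness of $\fp_\infty(R)$ to obtain closure under direct summands. Third, given a short exact sequence $0\to A\to B\to C\to 0$ with $A,C\in\fp_n(R)=\fp_\infty(R)$, apply thickness to get $B\in\fp_\infty(R)=\fp_n(R)$: this is closure under extensions. Fourth, given the same sequence with $A,B\in\fp_n(R)$, thickness gives $C\in\fp_n(R)$: this is closure under cokernels of monomorphisms. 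Finally, for closure under kernels of epimorphisms, quote the preceding Proposition (\cite[Corollary 2.6]{bp}) directly.

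I do not expect any genuine obstacle here—the corollary is a bookkeeping consequence of facts assembled just above it. The only point that requires a moment's care is making sure the cokernel-of-monomorphism case and the extension case are both legitimately instances of the ``two out of three'' property of a thick subcategory, applied to the correct pair of terms; once the identification $\fp_n(R)=\fp_\infty(R)$ is in place, everything else is a direct citation.
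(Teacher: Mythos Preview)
Your proposal is correct and aligns with the paper, which gives no explicit proof and treats the corollary as immediate from Proposition~\ref{fpnco5} together with the thickness of $\fp_\infty(R)=\fp_n(R)$ recorded just before it. As a minor aside, thickness already yields closure under kernels of epimorphisms via the two-out-of-three property, so your separate citation of \cite[Corollary~2.6]{bp} is redundant but harmless.
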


A result of Gersten \cite[Theorem 2.3]{Gersten} shows that if $R$ is a regular coherent ring then the inclusion of the finitely generated projective modules  in the category $\fp_{1}(R)$ induces isomorphisms $K_{i}(\fp_{1}(R))\simeq K_{i}(R)$ for all $i\geq0$. Recall that a ring is said  to be regular if every finitely generated ideal of $R$ has finite projective dimension. If  $R$ is coherent, Quentel shows  in \cite{Quentel} that $R$ is regular if and only if every finitely presented module has finite projective dimension, see \cite[Theorem 6.2.1]{Glaz1}. Motivated by this fact, we introduce the following definition:
  \begin{defn}[\emph{$n$-regular ring}]\label{defnreg}
Let $n\geq1$ be a integer. A ring $R$ is called \emph{$n$-regular} if each finitely $n$-presented $R$-module has finite projective dimension. 
\end{defn}
A $1$-regular ring is a regular ring. In the next section we extend the mentioned result of Gersten to strong $n$-coherent rings which are $n$-regular, see Theorem \ref{fpnco1}. The ideas of strong $n$-coherence and $n$-regularity have been studied before for commutative rings. Following \cite{Costa}, for $n,d\geq0$, recall that a ring is said to be $(n,d)$-Ring if every $n$-presented module has projective dimension at most $d$. These rings are strong $sup\left\lbrace {n,d}\right\rbrace $-coherent. In \cite[Example 6.5]{Costa}  there is an example of a non-coherent ring $R$ of weak dimension one that is a $(2,1)$-Ring.
In the commutative case, a ring is called \emph{$n$-Von Neumann regular} if it is an $(n,0)$-Ring. Thus, 1-Von Neumann regular rings are the Von Neumann regular rings, see \cite[Theorem 1.3]{Costa}. This notion of regularity does not coincide with Definition \ref{defnreg}. Both concepts are related as follows. 
\begin{prop} \cite[Theorem 2.1]{Mahdou}\label{2de3}
Let $R$ be a commutative ring, $R$ is $n$-Von Neumann regular if and only if
\begin{itemize} 
\item[a)] $R$ is strong $n$-coherent, 
\item[b)] $R$ is $n$-regular,
\item[c)] every finitely generated proper ideal of $R$ has non-zero annihilator. 
\end{itemize}
\end{prop}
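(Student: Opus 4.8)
The plan is to prove the two implications separately; the forward one is essentially formal, while all of the content lies in the converse, which I would reduce to a statement about finitely presented modules of projective dimension $\le 1$. For $(\Rightarrow)$: suppose $R$ is $n$-Von Neumann regular, i.e.\ an $(n,0)$-ring, so every module in $\fp_n(R)$ is projective; since such modules are finitely generated, $\operatorname{Proj}(R)=\fp_n(R)$. As $\operatorname{Proj}(R)\subseteq\fp_\infty(R)\subseteq\fp_n(R)$ always, this forces $\fp_n(R)=\fp_\infty(R)$, so $R$ is strong $n$-coherent by \cite[Theorem 2.4]{bp} (part (a)); and every module in $\fp_n(R)$, being projective, has projective dimension $0$ (part (b)). For (c) I would argue by contraposition: let $I=(a_1,\dots,a_k)$ be a finitely generated ideal with $\operatorname{ann}(I)=0$. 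Then $\varphi\colon R\to R^{k}$, $x\mapsto(xa_1,\dots,xa_k)$, has kernel $\bigcap_i\operatorname{ann}(a_i)=\operatorname{ann}(I)=0$, so $0\to R\xrightarrow{\varphi}R^{k}\to C\to 0$ is a finite free resolution of $C:=\coker\varphi$; hence $C\in\fp_\infty(R)\subseteq\fp_n(R)$, so $C$ is projective and the sequence splits. A retraction $\rho$ of $\varphi$ then gives $1=\rho\varphi(1)=\sum_i\rho(e_i)\,a_i\in I$, so $I=R$. Thus every proper finitely generated ideal of $R$ has non-zero annihilator.

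For $(\Leftarrow)$, assume (a), (b), (c). The key assertion is: \emph{if (c) holds, then every finitely presented $R$-module $N$ with $pd(N)\le 1$ is projective.} Granting this, let $M\in\fp_n(R)$. By (a) and \cite[Corollary 2.6]{bp}, $\fp_n(R)$ is closed under kernels of epimorphisms, so every syzygy $\Omega^{j}M$ again lies in $\fp_n(R)$, hence is finitely presented; by (b), $d:=pd(M)<\infty$. If $d\ge 1$ then $\Omega^{d-1}M$ is a finitely presented module with $pd(\Omega^{d-1}M)=1$, contradicting the key assertion; hence $d=0$ and $M\in\operatorname{Proj}(R)$. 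Therefore $\fp_n(R)=\operatorname{Proj}(R)$, i.e.\ $R$ is an $(n,0)$-ring.

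To prove the key assertion, write $0\to P\xrightarrow{\delta}R^{m}\to N\to 0$ with $P$ finitely generated (since $N$ is finitely presented) and projective (since $pd(N)\le 1$). Since a finitely generated projective module has locally constant rank with finitely many values, $\Spec R$ splits into finitely many clopen pieces on which $P$ has constant rank, so $R$ is a finite product of rings; each factor inherits (c), because a finitely generated proper ideal of a factor lifts to a finitely generated proper ideal of $R$ whose non-zero annihilator lands in that factor. We may therefore assume $P$ has constant rank $r$. If $N$ were not projective, then $N_{\mathfrak p}$ would fail to be free at some prime $\mathfrak p$, which is exactly the condition $\operatorname{Fitt}_{m-r}(N)\ne R$, and $\operatorname{Fitt}_{m-r}(N)$ is finitely generated. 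On the other hand, for every prime $\mathfrak p$ the localized map $\delta_{\mathfrak p}\colon R_{\mathfrak p}^{r}\hookrightarrow R_{\mathfrak p}^{m}$ is injective, so McCoy's theorem gives $\operatorname{ann}_{R_{\mathfrak p}}(\operatorname{Fitt}_{m-r}(N)_{\mathfrak p})=0$; since annihilators of finitely generated ideals commute with localization, $\operatorname{ann}_{R}(\operatorname{Fitt}_{m-r}(N))=0$, contradicting (c). The step I expect to be the main obstacle is exactly this last one: converting the failure of projectivity of a finitely presented module of projective dimension $\le 1$ into a finitely generated proper ideal with zero annihilator. The Fitting-ideal/McCoy mechanism is the right tool, but one must take care to reduce to the constant-rank case and to verify that property (c) descends along the resulting product decomposition of $R$.
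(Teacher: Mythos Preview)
The paper does not prove this proposition at all; it is simply quoted from \cite[Theorem 2.1]{Mahdou}. Your argument is a correct, self-contained proof of the statement, so there is nothing in the paper to compare it against beyond that citation.

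A few comments on the substance. The forward direction is clean; your contrapositive for (c) via the split sequence $0\to R\xrightarrow{\varphi}R^{k}\to C\to 0$ is a nice way to avoid invoking external results. For the converse, you correctly isolate the key assertion---under (c), every finitely presented module of projective dimension at most $1$ is projective---and then feed (a) and (b) into it via syzygies. This key assertion is exactly the part of Bass's characterisation of CH-rings that the paper mentions right after the proposition (every finitely generated submodule of a projective is a direct summand, applied to $P\hookrightarrow R^{m}$); Mahdou's proof uses that characterisation directly, whereas you prove what is needed by hand with Fitting ideals and McCoy's theorem.

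Two points worth making explicit in your write-up. First, ``constant rank'' does not make $P$ free globally, but you do not need it to be: you apply McCoy only after localising at each prime (where $P_{\mathfrak p}\cong R_{\mathfrak p}^{r}$), and then globalise using that Fitting ideals and annihilators of finitely generated ideals commute with localisation. Second, when you write that failure of projectivity of $N$ is ``exactly the condition $\operatorname{Fitt}_{m-r}(N)\ne R$'', you are implicitly using $\operatorname{Fitt}_{m-r-1}(N)=0$, which is what your constant-rank reduction buys: at each prime the presentation matrix of $N_{\mathfrak p}$ has only $r$ columns, so there are no $(r+1)\times(r+1)$ minors. With that, $\operatorname{Fitt}_{m-r}(N)=R$ genuinely forces $N$ to be locally free of rank $m-r$, hence projective (as $N$ is finitely presented).
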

The rings we are interested in are those with the first two conditions of Proposition \ref{2de3}.
 A ring $R$  such that every finitely generated proper ideal of $R$ has a non-zero annihilator is called $CH$-Ring. Another equivalent definition is as follows: every finitely generated submodule of a projective $R$-module $P$ is a direct summand of $P$ \cite[Theorem 5.4]{Bass}.

Let $n\geq 1$. By  \cite[Theorem 3.9]{Zhu},  every $n$-presented $R$-module is flat if and only if $R$ is $n$-Von Neumann regular. In particular every module of $\mathcal{FP}_{n}(R)$ is projective. Then if $R$ is $n$-Von Neumann regular we obtain 
$$\operatorname{Proj}(R)=\mathcal{FP}_{n}(R).$$
Under the hypothesis of $n$-Von-Neumann regularity there is no sense in studying the relation between the K-theory of $R$ and $\fp_{n}(R)$.

Let $k$ be a field and $E$ be a $k$-vector space with infinite rank. Set $B=k\ltimes E$ the trivial extension of $k$ by $E$. Let $A$ be a Noetherian ring of global dimension 1, and $R=A\times B$ the direct product of $A$ and $B$. By \cite[Theorem 3.4]{Mahdou} $R$ is a $(2,1)$-ring and is not a $2$-Von Neumann regular ring. We obtain an example of a strong 2-coherent and 2-regular ring where 
$$
\operatorname{Proj}(R)\neq \mathcal{FP}_{2}(R).
$$

\section{K-theory of $\fp_{n}(R)$}\label{sec3}

In this section we show $ \fp_{n}(R)$ is an exact category in the sense of \cite[Definition 3.1.1]{ros}.
We also prove that if $R$ is a strong $n$-coherent and $n$-regular ring then $$K_{i}(\fp_{n}(R))\simeq K_{i}(R)  \qquad  \forall i\geq 0.$$
\begin{prop} \label{excat}
$\fp_{n}(R)$ is an exact category. 
\end{prop}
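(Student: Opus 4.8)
The plan is to verify the axioms of an exact category in the sense of \cite[Definition 3.1.1]{ros} for $\fp_n(R)$, using as admissible short exact sequences precisely those sequences $0\to A\to B\to C\to 0$ that are exact in $R$-$\operatorname{Mod}$ and whose terms all lie in $\fp_n(R)$. Since $R$-$\operatorname{Mod}$ is already an exact (indeed abelian) category, the verification reduces to checking that $\fp_n(R)$ is closed under the relevant operations inside $R$-$\operatorname{Mod}$; the standard criterion (see \cite[Lemma A.1]{ros} or \cite{Buhler}) says that a full subcategory of an abelian category which is closed under extensions inherits an exact structure, provided it is also closed under the formation of the kernels of its admissible epimorphisms and the cokernels of its admissible monomorphisms, in the appropriate sense. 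So the real content is: (i) $\fp_n(R)$ is closed under extensions, and (ii) the pushout/pullback axioms hold, which here follow from closure under extensions together with closure under direct sums.

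First I would record that $\fp_n(R)$ contains $0$ and is closed under finite direct sums: if $F_n\to\cdots\to F_0\to M\to 0$ and $F'_n\to\cdots\to F'_0\to M'\to 0$ are finite $n$-presentations, then taking direct sums termwise gives a finite $n$-presentation of $M\oplus M'$. Next, the key step: closure under extensions. Given a short exact sequence $0\to A\to B\to C\to 0$ in $R$-$\operatorname{Mod}$ with $A,C\in\fp_n(R)$, I would invoke the horseshoe-type lemma for $n$-presentations. Concretely, one builds simultaneously compatible free presentations: choose finitely generated free $F_0\twoheadrightarrow A$ and $G_0\twoheadrightarrow C$, lift the latter through $B\twoheadrightarrow C$, and assemble $F_0\oplus G_0\twoheadrightarrow B$; then the kernels fit into a short exact sequence $0\to\ker(F_0\to A)\to\ker(F_0\oplus G_0\to B)\to\ker(G_0\to C)\to 0$, and one iterates. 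After $n$ steps the two outer kernels are finitely generated (being the $n$-th syzygies of $A$ and $C$ with respect to chosen resolutions, which are finitely generated since $A,C\in\fp_n(R)$), hence so is the middle one, giving $B\in\fp_n(R)$. This horseshoe argument is essentially \cite[Remark 1.5]{bp} or can be extracted from the results of \cite{bp, Zhu} cited above; I would either cite it or spell out the two-step induction.

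Having closure under extensions, the remaining exact-category axioms are formal. The admissible monomorphisms are the kernel-inclusions of admissible short exact sequences and the admissible epimorphisms the corresponding surjections; identities are admissible, compositions of admissible monos (resp. epis) are admissible by a diagram chase using closure under extensions, and the pushout of an admissible mono along an arbitrary map in $\fp_n(R)$ exists in $\fp_n(R)$ and is again an admissible mono — the pushout is computed in $R$-$\operatorname{Mod}$ and one checks its terms lie in $\fp_n(R)$ using closure under direct sums and extensions (the pushout of $A\rightarrowtail B$ along $A\to A'$ is a quotient of $B\oplus A'$ with kernel a quotient of $A$, fitting into an extension with computable ends). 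Dually for pullbacks of admissible epis. I expect the main obstacle to be the clean formulation and citation of the horseshoe lemma for finitely $n$-presented modules: this is the one non-formal input, and care is needed because $\fp_n(R)$ is generally \emph{not} abelian (kernels of monos need not lie in it unless $R$ is strong $n$-coherent), so one must be sure the only kernels and cokernels invoked are those appearing in admissible sequences. Everything else is a routine transcription of the verification that a full additive subcategory of an abelian category, closed under extensions, is exact.
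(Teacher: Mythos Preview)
Your approach is essentially the same as the paper's: both reduce the question to showing that $\fp_n(R)$ is a full additive extension-closed subcategory of the abelian category $R$-$\operatorname{Mod}$, from which the exact structure is inherited. The paper simply cites \cite[Proposition~1.7]{bp} for closure under extensions, whereas you unpack the horseshoe argument that underlies it; either is acceptable, and your discussion of the pushout/pullback axioms is more detailed than necessary but not wrong.

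There is, however, one omission. The definition of exact category used here (\cite[Definition~3.1.1]{ros}) --- and in any case the input to Quillen's $K$-theory --- requires the category to have a small skeleton, and you do not address this. The paper does: it observes that $\fp_0(R)$ has a small skeleton $\mathcal{S}_0$ consisting of quotient modules of $\{R^k : k\in\mathbb{N}\}$, and then $\mathcal{S}_n=\fp_n(R)\cap\mathcal{S}_0$ is a small skeleton for $\fp_n(R)$. This is routine, but it is part of what is being asserted, so you should include it.
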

\begin{proof}
The category $\fp_{n}(R)$ is a full additive subcategory of $R$-$\operatorname{Mod}$ which is an abelian category. We have to prove that $\fp_{n}(R)$ is closed under extensions and has a small skeleton. The first assertion follows from \cite[Proposition 1.7]{bp}.   The category of finitely generated $R$-modules $\fp_{0}(R)$  has a small skeleton $\mathcal{S}_{0}$ which is the set of quotient modules of 
$$\{  R^{k}: k\in \mathbb{N}\}. $$
The set $\mathcal{S}_{n}=\fp_{n}(R) \cap \mathcal{S}_{0}$ is a skeleton of $\fp_{n}(R)$.
\end{proof}

\begin{thm}\label{fpnco1}
If $R$ is a strong $n$-coherent and $n$-regular ring then
$$
K_{i}(R)\simeq K_{i}(\mbox{$n$-$\operatorname{Coh(R)}$})\simeq K_{i}(\fp_{n}(R))\qquad \qquad \forall i\geq 0.
$$
\end{thm}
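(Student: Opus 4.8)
The plan is to apply the Resolution Theorem (Theorem \ref{Qres}) to the inclusion $\operatorname{Proj}(R)\hookrightarrow\fp_{n}(R)$, viewing $\fp_{n}(R)$ as the ambient exact category $\mathcal{M}$ and $\operatorname{Proj}(R)$ as the subcategory $\mathcal{P}$. The statement then follows by combining this with Proposition \ref{fpnco5}, which identifies $\fp_{n}(R)$ with $n$-$\operatorname{Coh}(R)$ under the strong $n$-coherence hypothesis, together with the identification $K_{i}(R)\simeq K_{i}(\operatorname{Proj}(R))$ that is the definition of the $K$-theory of the ring.

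First I would check that $\operatorname{Proj}(R)$ is a full subcategory of $\fp_{n}(R)$ closed under extensions: finitely generated projective modules are certainly finitely $n$-presented for every $n$, and an extension of projectives splits, hence is again finitely generated projective. Next I would verify hypothesis (1) of Theorem \ref{Qres}: given a short exact sequence $0\to M\to P\to P'\to 0$ in $\fp_{n}(R)$ with $P,P'\in\operatorname{Proj}(R)$, the sequence splits because $P'$ is projective, so $M$ is a direct summand of $P$, hence finitely generated projective. (Alternatively this is immediate since $M\cong\ker(P\to P')$ and one may also invoke Corollary \ref{cc1}.) The substantive point is hypothesis (2): every $M\in\fp_{n}(R)$ must admit a \emph{finite} resolution by finitely generated projective modules. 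Since $R$ is strong $n$-coherent, Theorem 2.4 of \cite{bp} gives $\fp_{n}(R)=\fp_{\infty}(R)$, so $M$ has a (possibly infinite) resolution $\cdots\to Q_{1}\to Q_{0}\to M\to 0$ by finitely generated free modules; since $R$ is $n$-regular, $pd(M)<\infty$, say $pd(M)=k$. The standard argument then shows that the $k$-th syzygy $\Omega^{k}M=\ker(Q_{k-1}\to Q_{k-2})$ is projective: it is a $k$-th syzygy in a resolution by finitely generated projectives, so $\operatorname{Ext}^{1}(\Omega^{k}M,-)\cong\operatorname{Ext}^{k+1}(M,-)=0$, whence the surjection $Q_{k}\to\Omega^{k}M$ splits and $\Omega^{k}M$ is a direct summand of the finitely generated free module $Q_{k}$, hence in $\operatorname{Proj}(R)$. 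Truncating yields the required finite resolution $0\to\Omega^{k}M\to Q_{k-1}\to\cdots\to Q_{0}\to M\to 0$.

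With the three hypotheses verified, Theorem \ref{Qres} yields $K_{m}(\operatorname{Proj}(R))\xrightarrow{\simeq}K_{m}(\fp_{n}(R))$ for all $m\geq 0$, which is $K_{i}(R)\simeq K_{i}(\fp_{n}(R))$. Finally, Proposition \ref{fpnco5} identifies the category $\fp_{n}(R)$ with $n$-$\operatorname{Coh}(R)$ (as exact categories, with the same admissible short exact sequences), giving the middle isomorphism $K_{i}(n\text{-}\operatorname{Coh}(R))\simeq K_{i}(\fp_{n}(R))$ and completing the chain.

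I expect the main obstacle to be the careful bookkeeping in step two, namely ensuring that the finite projective resolution one extracts lies in $\operatorname{Proj}(R)$ \emph{as a resolution inside the exact category} $\fp_{n}(R)$ — that is, that each syzygy appearing is itself finitely $n$-presented (which it is, by strong $n$-coherence and \cite[Corollary 2.6]{bp}: kernels of epimorphisms between modules in $\fp_{n}(R)$ stay in $\fp_{n}(R)$) and that the top syzygy is not merely projective but finitely generated projective (which follows because it is a summand of a finitely generated free module). The finiteness of $pd(M)$ is exactly what $n$-regularity buys, and it is what makes Quillen's theorem applicable rather than only a statement about $\fp_{\infty}$; without it one would at best get a statement about the $K$-theory of a category with a dimension filtration rather than an honest isomorphism.
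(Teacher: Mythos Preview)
Your proposal is correct and follows essentially the same route as the paper: apply the Resolution Theorem to $\operatorname{Proj}(R)\hookrightarrow\fp_{n}(R)$, check closure under extensions and under kernels of admissible epimorphisms via splitting, and use strong $n$-coherence plus $n$-regularity to truncate an infinite free resolution to a finite one by finitely generated projectives. The only cosmetic differences are that the paper deduces finite generation of the top syzygy from its description as a quotient $F_{d+1}/\ker u_{d+1}$ rather than as a summand of $Q_{k}$, and does not spell out the appeal to Proposition~\ref{fpnco5} for the middle isomorphism as explicitly as you do.
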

\begin{proof}
We are going to use the Resolution Theorem with $\mathcal{M}= \fp_{n}(R)$  and $\mathcal{P}=\operatorname{Proj}(R)$. 
Note that $\operatorname{Proj}(R)\subseteq \fp_{\infty}(R)\subseteq \fp_{n}(R)$.
Consider the following exact sequence
$$
0\rightarrow P_{1} \rightarrow M \rightarrow P_{2} \rightarrow 0 \qquad \quad \mbox{ with $P_{1},P_{2} \in \operatorname{Proj}(R)$ and $M\in \fp_{n}(R)$.}
$$
Because $P_{2}$ is projective we obtain $M\simeq P_{1} \oplus P_{2}$ then $M$ belongs to $\operatorname{Proj}(R)$. We conclude that $\operatorname{Proj}(R)$ is closed under extension in $\fp_{n}(R)$. 

Let 
$$
0\rightarrow M \rightarrow P_{1} \rightarrow P_{2} \rightarrow 0 \qquad \quad \mbox{ with $P_{1},P_{2} \in \operatorname{Proj}(R)$ and $M\in \fp_{n}(R)$.}
$$
we have $P_{1}\simeq M \oplus P_{2}$ and  $M$ is also projective because $P_{1}$ is projective. 
As $\fp_{\infty}(R)$ is thick then $M$ belongs to $\fp_{\infty}(R)$ and in particular is finitely generated. We conclude that $M\in \operatorname{Proj}(R)$. 

Let $M\in \fp_{n}(R)$. Then there exists a resolution of free modules
$$
F_{n}\xrightarrow{u_{n}} F_{n-1}\rightarrow \ldots \rightarrow F_{1}\rightarrow F_{0} \rightarrow M \rightarrow 0
$$ 
As $R$ is strong $n$-coherent we can take a free resolution as long as we want
$$
\ldots\rightarrow F_{n+1}\rightarrow F_{n}\rightarrow F_{n-1}\rightarrow \ldots \rightarrow F_{1}\rightarrow F_{0} \rightarrow M \rightarrow 0
$$
By hypothesis we know $M$ has finite projective dimension, then there exists $d$   
$$
0\rightarrow \ker u_{d} \rightarrow F_{d} \xrightarrow{u_{d}} \ldots \rightarrow F_{1}\rightarrow F_{0} \rightarrow M \rightarrow 0
$$
with $\ker u_{d}$ projective.  As 
$$
\ker u_{d} = \operatorname{Img} u_{d+1}\simeq F_{d+1}/\ker u_{d+1}
$$
we obtain $\ker u_{d}$ is finitely generated. 
\end{proof}

\begin{cor}\cite[Theorem 2.3]{Gersten} If $R$ is a coherent and regular ring then
$$
K_{i}(R)\simeq K_{i}(\fp_{1}(R))\qquad \qquad \forall i\geq 0.
$$
\end{cor}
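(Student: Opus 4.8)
The final statement is the corollary that for a coherent and regular ring $R$, one has $K_i(R) \simeq K_i(\fp_1(R))$ for all $i \geq 0$.

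The plan is to derive this as a special case of Theorem \ref{fpnco1} by checking that the hypotheses apply when $n=1$. First I would observe that a coherent ring is precisely a strong $1$-coherent ring: the excerpt states explicitly that ``a coherent ring is a 1-coherent ring (1-strong coherent ring)'', and equivalently that $R$ is coherent if and only if $\lambda(R) \leq 1$, which by the discussion after Example \ref{ejemplo} is the definition of strong $1$-coherent. Second I would note that by Definition \ref{defnreg}, a $1$-regular ring is a ring in which every finitely $1$-presented (i.e. finitely presented) module has finite projective dimension, and the excerpt states ``A $1$-regular ring is a regular ring'' — this is Quentel's theorem cited from \cite{Quentel} (see \cite[Theorem 6.2.1]{Glaz1}): for coherent $R$, regularity in the sense that every finitely generated ideal has finite projective dimension is equivalent to every finitely presented module having finite projective dimension.

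Once both hypotheses are verified, I would simply invoke Theorem \ref{fpnco1} with $n=1$, which gives $K_i(R) \simeq K_i(\text{$1$-}\operatorname{Coh}(R)) \simeq K_i(\fp_1(R))$ for all $i \geq 0$; discarding the middle term yields the desired isomorphism $K_i(R) \simeq K_i(\fp_1(R))$. The proof is therefore essentially a one-line deduction: \emph{Apply Theorem \ref{fpnco1} with $n=1$, using that coherent rings are strong $1$-coherent and that regular rings are $1$-regular.}

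There is no real obstacle here — the only thing to be careful about is the terminological matching between ``coherent/regular'' and ``strong $1$-coherent/$1$-regular'', which is exactly the content of the remarks and the classical results of Quentel and Gersten already recalled in the text. So the main (minor) point is just to cite the right equivalences; the $K$-theoretic content is entirely carried by Theorem \ref{fpnco1} and ultimately by the Resolution Theorem \ref{Qres} applied to $\operatorname{Proj}(R) \hookrightarrow \fp_1(R)$.
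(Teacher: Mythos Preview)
Your proposal is correct and matches the paper's approach: the corollary is stated immediately after Theorem \ref{fpnco1} with no separate proof, being understood as the case $n=1$ once one uses the identifications (already recorded in the paper) that coherent $=$ strong $1$-coherent and, via Quentel's theorem, regular coherent $=$ $1$-regular. Your only addition is making explicit the terminological matching, which is exactly what the paper leaves implicit.
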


\section{The groups $\operatorname{Nil}(\mathcal{FP}_{n})$}\label{sec4}

Let $\mathcal{C}$ be an exact category, $\operatorname{Nil}(\mathcal{C})$ is the category whose objects are the pairs $(A,\alpha)$ where $A\in \mathcal{C}$  and $\alpha: A\rightarrow A$ is a nilpotent endomorphism. A morphism $(A,\alpha)\rightarrow (B,\beta)$ in $\operatorname{Nil}(\mathcal{C})$ is a morphism $f:A\rightarrow B$ in $\mathcal{C}$ such that $f\circ \alpha=\beta\circ f$.
The category $\operatorname{Nil}(\mathcal{C})$ is an exact category and there exist exact functors
$$
\mathcal{C}\rightarrow \operatorname{Nil}(\mathcal{C}) \quad A \mapsto (A,0)\qquad \qquad  \operatorname{Nil}(\mathcal{C}) \rightarrow \mathcal{C} \quad (A,\alpha)
\mapsto A. $$
Taking $\mathcal{C}=\operatorname{Proj}(R)$ and $\mathcal{N}=\operatorname{Nil}(\operatorname{Proj}(R))$, note that $K_{i}(R)$ is a direct summand of $K_{i}(\mathcal{N})$ and define $\operatorname{Nil}_{i}(R)$ the cokernel of $K_{i}(R)\rightarrow K_{i}(\mathcal{N})$. We obtain 
$$
K_{i}(\mathcal{N})= K_{i}(R) \oplus \operatorname{Nil}_{i}(R) \qquad \quad i \geq 0.
$$
 We consider $\mathcal{C}=\fp_{n}(R)$.
 Let ${\mathcal{N}}^{*}_{n}=\operatorname{Nil}(\fp_{n}(R))$ and ${\operatorname{Nil}}^{n}_{i}(R)$ the cokernel of $K_{i}(\fp_{n}(R))\rightarrow K_{i}(\mathcal{N}_{n}^{*})$. We obtain 
$$
K_{i}(\mathcal{N}_{n}^{*})= K_{i}(\fp_{n}(R)) \oplus \operatorname{Nil}^{n}_{i}(R) \qquad \quad i \geq 0.
$$
\begin{lem}\label{lemares}
\begin{enumerate}
\item \label{lema1} \cite[Lemma 6.3]{Swan} For every $(M,\alpha)\in\mathcal{N}^{*}_{1}$ there exist a $(P,\beta)\in \mathcal{N}$ and an epimorphism $\varphi: P\twoheadrightarrow M$ such that the following diagram is commutative
$$
\xymatrix{
P \ar@{->>}[r]^-{\varphi} \ar[d]_-{\beta} &M \ar[d]^-{\alpha} \\
P  \ar@{->>}[r]_-{\varphi} & M
}
$$
\item \label{lema2} If $R$ is coherent and regular then every $(M,\alpha)\in\mathcal{N}^{*}_{1}$ has a finite $\mathcal{N}$-resolution.
\item \label{lema3} If $R$ is strong $n$-coherent and $n$-regular  then every $(M,\alpha)\in\mathcal{N}^{*}_{n}$ has a finite $\mathcal{N}$-resolution.
\end{enumerate}
\end{lem}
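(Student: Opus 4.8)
The plan is to establish (1), observe that (2) is the case $n=1$ of (3) (strong $1$-coherent means coherent, $1$-regular means regular, and $\mathcal{N}^{*}_{1}=\operatorname{Nil}(\fp_{1}(R))$), and then prove (3) by iterating (1) and truncating by the projective dimension bound that $n$-regularity supplies.

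Part (1) is \cite[Lemma 6.3]{Swan}, but since the same construction is the engine of (3) I would record it explicitly and in a form valid for every finitely generated $M$. Pick $N$ with $\alpha^{N}=0$ and an epimorphism $\pi\colon R^{m}\twoheadrightarrow M$. Put $P=(R^{m})^{N}$, let $\beta$ be the shift $\beta(v_{0},\dots,v_{N-1})=(0,v_{0},\dots,v_{N-2})$, and let $\varphi(v_{0},\dots,v_{N-1})=\sum_{i=0}^{N-1}\alpha^{i}\pi(v_{i})$. Then $P$ is finitely generated free, $\beta^{N}=0$, $\varphi$ is surjective (already on the summand with $v_{1}=\dots=v_{N-1}=0$), and a one-line computation using $\alpha^{N}=0$ gives $\varphi\beta=\alpha\varphi$. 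Hence $(P,\beta)\in\mathcal{N}$ and $\varphi\colon(P,\beta)\twoheadrightarrow(M,\alpha)$ is a morphism in $\operatorname{Nil}(\fp_{0}(R))$, in particular in $\mathcal{N}^{*}_{n}$ whenever $M\in\fp_{n}(R)$.

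Now (3). Let $(M,\alpha)\in\mathcal{N}^{*}_{n}$. Apply (1) to get $\varphi_{0}\colon(P_{0},\beta_{0})\twoheadrightarrow(M,\alpha)$ with $P_{0}$ finitely generated free. Its kernel $M_{1}=\ker\varphi_{0}$ is $\beta_{0}$-stable because $\varphi_{0}$ intertwines $\beta_{0}$ and $\alpha$, and $M_{1}\in\fp_{n}(R)$ since $R$ is strong $n$-coherent, so $\fp_{n}(R)$ is closed under kernels of epimorphisms; thus $(M_{1},\alpha_{1})\in\mathcal{N}^{*}_{n}$ with $\alpha_{1}=\beta_{0}|_{M_{1}}$. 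Iterating yields a resolution
$$\cdots\to(P_{1},\beta_{1})\to(P_{0},\beta_{0})\to(M,\alpha)\to0$$
in $\mathcal{N}^{*}_{n}$ whose underlying sequence is a free resolution of $M$ with syzygies $M_{i}\in\fp_{n}(R)=\fp_{\infty}(R)$, each carrying the induced nilpotent $\alpha_{i}$. Since $R$ is $n$-regular, $d:=pd(M)<\infty$. If $d=0$, $M$ is finitely generated projective and $0\to(M,\alpha)\xrightarrow{\mathrm{id}}(M,\alpha)\to0$ is the resolution. If $d\geq1$, the $d$-th syzygy $M_{d}=\ker(P_{d-1}\to P_{d-2})$ is projective by Schanuel's lemma, and finitely generated since $M_{d}\in\fp_{\infty}(R)\subseteq\fp_{0}(R)$; hence $M_{d}\in\operatorname{Proj}(R)$ and $(M_{d},\alpha_{d})\in\mathcal{N}$. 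Consequently
$$0\to(M_{d},\alpha_{d})\to(P_{d-1},\beta_{d-1})\to\cdots\to(P_{0},\beta_{0})\to(M,\alpha)\to0$$
is a finite $\mathcal{N}$-resolution of $(M,\alpha)$; it is admissible exact in $\mathcal{N}^{*}_{n}$ because a sequence in $\operatorname{Nil}(\fp_{n}(R))$ is exact exactly when its underlying $R$-module sequence is, and here all syzygies and cokernels lie in $\fp_{n}(R)$ by strong $n$-coherence.

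The argument is essentially routine once (1) is available; the only step that really uses the hypotheses is the last one, that the $d$-th syzygy together with its induced nilpotent endomorphism lies in $\mathcal{N}=\operatorname{Nil}(\operatorname{Proj}(R))$. Strong $n$-coherence keeps every syzygy inside $\fp_{n}(R)=\fp_{\infty}(R)$ — hence finitely generated — while $n$-regularity forces the free resolution to reach a projective module after $pd(M)$ steps. Drop either hypothesis and a syzygy may fail to be finitely generated or the resolution may fail to terminate, so that is where I expect the main (and essentially only) obstacle to be.
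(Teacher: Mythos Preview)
Your proposal is correct and follows essentially the same route as the paper: the same shift construction $(P,\beta)=(Q^{N},\text{shift})$ with $\varphi=\sum_i\alpha^{i}\pi$ for part (1), the observation that (2) is the case $n=1$ of (3), and for (3) the iteration of (1) using that strong $n$-coherence keeps the successive kernels in $\fp_{n}(R)$, with $n$-regularity providing the termination. Your truncation step is in fact slightly more precise than the paper's, which asserts that ``there exists $k$ such that $P_{k}=0$'' rather than that the $d$-th syzygy lands in $\operatorname{Proj}(R)$.
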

\begin{proof}
\begin{enumerate}
\item  Consider $n\in \mathbb{N}$ such that $\alpha^{n+1}=0$. Let $Q$ be a projective and finitely generated module with $f: Q \twoheadrightarrow M$. Let $P=Q^{n+1}$ and define 
$$
\begin{array}{ll}
\beta : P \rightarrow P  & \beta(x_{0},x_{1},\ldots x_{n})=(0,x_{0},\ldots x_{n-1}) \\
\varphi: P\twoheadrightarrow M &\varphi(x_{0},x_{1},\ldots x_{n})= f(x_{0})+\alpha(f(x_{1}))+\alpha^2(f(x_{2}))+\ldots \alpha^n(f(x_{n})).
\end{array}
$$
\item This case is a particular case of \eqref{lema3} with $n=1$.
\item  Given $(M,\alpha)\in\mathcal{N}^{*}_{n} \subseteq \mathcal{N}^{*}_{1}$, by \eqref{lema1} there exist $(P_{0},\beta_{0})\in \mathcal{N} $ and an epimorphism in $\mathcal{N}^{*}_{n}$ such that $\varphi_{0}: (P_{0},\beta_{0}) \twoheadrightarrow (M,\alpha).$
Because $R$ is strong $n$-coherent then $\mathcal{FP}_{n}(R)$ is closed by taking kernels of epimorphisms. Then $(\operatorname{ker}(\varphi_{0}),\beta_{0}|_{\operatorname{ker}(\varphi_{0})})$ belongs to $\mathcal{N}^{*}_{n} \subseteq \mathcal{N}^{*}_{1}$ and we can apply \eqref{lema1} again. 
There exist $(P_{1},\beta_{1})\in \mathcal{N} $ and an epimorphism in $\mathcal{N}^{*}_{n}$ such that $\varphi_{1}: (P_{1},\beta_{1}) \twoheadrightarrow (\operatorname{ker}(\varphi_{0}),\beta_{0}|_{\operatorname{ker}(\varphi_{0})})$. In this way we construct a resolution 
$$
\ldots (P_{i},\beta_{i}) \xrightarrow{\varphi_{i}} (P_{i-1},\beta_{i-1}) \xrightarrow{\varphi_{i-1}}\ldots (P_{0},\beta_{0})\xrightarrow{\varphi_{0}} (M,\alpha) \rightarrow 0
$$ 
As $R$ is $n$-regular, every $M\in \mathcal{FP}_{n}(R)$ has finite projective dimension, then there exists $k\in\mathbb{N}$ such that $P_{k}=0$. We obtain a finite $\mathcal{N}$-resolution of $(M,\alpha)$. 
\end{enumerate}
\end{proof}

\begin{prop}\label{losnil}
If $R$ is a strong $n$-coherent and $n$-regular ring  then $$ \operatorname{Nil}_{i}^{n}(R) \simeq \operatorname{Nil}_{i}(R).$$
\end{prop}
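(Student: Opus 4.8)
The plan is to apply the Resolution Theorem (Theorem \ref{Qres}) to the inclusion of exact categories $\mathcal{N} = \operatorname{Nil}(\operatorname{Proj}(R)) \hookrightarrow \mathcal{N}^{*}_{n} = \operatorname{Nil}(\fp_{n}(R))$, and then to combine the resulting isomorphism with the one of Theorem \ref{fpnco1}. The point to keep in mind throughout is that a sequence in $\operatorname{Nil}(\mathcal{C})$ is exact exactly when the underlying sequence in $\mathcal{C}$ is exact, the structure endomorphisms being already part of the data and compatible by assumption; so all the exactness bookkeeping reduces to statements about the underlying modules, which were settled in the proof of Theorem \ref{fpnco1}.

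First I would verify the hypotheses of Theorem \ref{Qres} with $\mathcal{M} = \mathcal{N}^{*}_{n}$ and $\mathcal{P} = \mathcal{N}$. For closure of $\mathcal{N}$ under extensions in $\mathcal{N}^{*}_{n}$: given an exact sequence $0 \to (P_{1},\beta_{1}) \to (M,\alpha) \to (P_{2},\beta_{2}) \to 0$ in $\mathcal{N}^{*}_{n}$ with $(P_{i},\beta_{i}) \in \mathcal{N}$, the underlying sequence splits since $P_{2}$ is projective, so $M \simeq P_{1}\oplus P_{2} \in \operatorname{Proj}(R)$ exactly as in the proof of Theorem \ref{fpnco1}, while $\alpha$ is nilpotent by assumption; hence $(M,\alpha)\in\mathcal{N}$. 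Hypothesis (1) is checked the same way: for $0 \to (M,\alpha) \to (P_{1},\beta_{1}) \to (P_{2},\beta_{2}) \to 0$ exact with $(P_{i},\beta_{i})\in\mathcal{N}$, one has $P_{1}\simeq M\oplus P_{2}$, so $M$ is a finitely generated projective module (being a direct summand of $P_{1}$), i.e. $M\in\operatorname{Proj}(R)$, and $\alpha=\beta_{1}|_{M}$ is nilpotent because $\beta_{1}$ is, so $(M,\alpha)\in\mathcal{N}$. Hypothesis (2) is precisely part (3) of Lemma \ref{lemares}. Therefore the inclusion induces isomorphisms $K_{i}(\mathcal{N}) \xrightarrow{\simeq} K_{i}(\mathcal{N}^{*}_{n})$ for all $i\geq 0$.

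Finally I would use the commutative square of exact functors formed by the inclusions $\operatorname{Proj}(R)\hookrightarrow\fp_{n}(R)$ and $\mathcal{N}\hookrightarrow\mathcal{N}^{*}_{n}$ together with the two ``zero endomorphism'' functors $A\mapsto(A,0)$; this square visibly commutes. Applying $K_{i}$, the two resulting maps $K_{i}(\operatorname{Proj}(R))\to K_{i}(\mathcal{N})$ and $K_{i}(\fp_{n}(R))\to K_{i}(\mathcal{N}^{*}_{n})$ are the split injections whose cokernels are by definition $\operatorname{Nil}_{i}(R)$ and $\operatorname{Nil}^{n}_{i}(R)$; the first vertical map is the isomorphism of Theorem \ref{fpnco1} (recall $K_{i}(R)=K_{i}(\operatorname{Proj}(R))$) and the second vertical map is the isomorphism established above. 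Hence the induced map on cokernels $\operatorname{Nil}_{i}(R)\to\operatorname{Nil}^{n}_{i}(R)$ is an isomorphism, which is the assertion.

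I do not expect a genuine obstacle here: the substantive inputs are part (3) of Lemma \ref{lemares} and Theorem \ref{fpnco1}, both already proved, and the proposition is essentially the formal assembly of these via the Resolution Theorem. The only spot calling for a moment of care is hypothesis (1) of Theorem \ref{Qres}, where one must note that the finite generation of the kernel term and the nilpotency of its induced endomorphism are both automatic once the underlying-module argument of Theorem \ref{fpnco1} is in hand.
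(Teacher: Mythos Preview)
Your proposal is correct and follows essentially the same route as the paper: apply the Resolution Theorem to $\mathcal{N}\hookrightarrow\mathcal{N}^{*}_{n}$ using Lemma \ref{lemares}(3), combine with Theorem \ref{fpnco1}, and pass to cokernels. The only cosmetic differences are that you spell out the verification of closure under extensions and hypothesis (1) in detail (the paper just asserts them), and that the paper packages the final step as a 5-Lemma argument on the split short exact sequences $0\to\operatorname{Nil}_{i}\to K_{i}(\mathcal{N})\to K_{i}(\operatorname{Proj}(R))\to 0$ rather than speaking of induced maps on cokernels; these amount to the same thing.
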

\begin{proof}
We can regard $\mathcal{N}^{*}_{n}$ as the full subcategory of $R[t]$-modules consisting of modules $M$ which are $n$-coherent over $R$ and are such that the action of $t$ is a nilpotent endomorphism 
$$
t: M\rightarrow M \qquad m\mapsto t\cdot m.
$$
We obtain $\mathcal{N}\subseteq \mathcal{N}^{*}_{n}\subseteq R[t]$-$\operatorname{Mod}$. The category $\mathcal{N}^{*}_{n}$ is exact and $\mathcal{N}$ is a full subcategory of $\mathcal{N}^{*}_{n}$ which is closed under kernels of epimorphisms and extensions. As $R$ is strong $n$-coherent and $n$-regular we can use the Lemma \ref{lemares} \eqref{lema3} to obtain that every $M\in \mathcal{N}^{*}_{n}$  has a finite $\mathcal{N}$-resolution.
Because of the Resolution Theorem we obtain that the inclusion $\mathcal{N}\hookrightarrow \mathcal{N}^{*}_{n}$
induces isomorphisms 
$$
K_{i}(\mathcal{N}) \simeq K_{i}(\mathcal{N}^{*}_{n}).
$$
In Theorem \ref{fpnco1} we prove that the inclusion $\operatorname{Proj}(R)\hookrightarrow \mathcal{FP}_{n}(R)$
induces isomorphisms
$$
K_{i}(\operatorname{Proj}(R)) \simeq K_{i}(\mathcal{FP}_{n}(R)).
$$
Then we have the following diagram
$$
\xymatrix{
0\ar[r]  & \operatorname{Nil}_{i} (R)\ar[d]\ar[r] & K_{i}(\mathcal{N})\ar[r] \ar[d]^{\simeq}& K_{i}(\operatorname{Proj}(R))\ar[d]^{\simeq}\ar[r]&0\\ 
0\ar[r]  & \operatorname{Nil}_{i}^{n}(R) \ar[r] &K_{i}(\mathcal{N}^{*}_{n})\ar[r]& K_{i}(\mathcal{FP}_{n}(R))\ar[r] &0
}
$$
Because of the 5-Lemma applied to the previous diagram we obtain 
$$ \operatorname{Nil}_{i}^{n}(R) \simeq \operatorname{Nil}_{i}(R).$$
\end{proof}
\begin{rem}
The previous result with $n=1$ is \cite[Proposition 6.2]{Swan}.
\end{rem}

\section{Categorical properties of $\fp_{n}(R)$}\label{sec5}

The Fundamental Theorem in K-theory relates the K-theory of $R[t]$ and $R[t,t^{-1}]$ with the K-theory of a ring $R$. Let $R$ be a ring then if $i\geq 1$
$$
\begin{array}{c}
K_{i}(R[t])\simeq K_{i}(R) \oplus \operatorname{Nil}_{i-1}(R) \\
K_{i}(R[t,t^{-1}])\simeq K_{i}(R)\oplus K_{i-1}(R) \oplus \operatorname{Nil}_{i-1}(R) \oplus \operatorname{Nil}_{i-1}(R).
\end{array}
$$
We want to know when $\operatorname{Nil}_{i}(R)=0$ in order to obtain an expression which relates the K-groups of $R[t,t^{-1}]$ or $R[t]$ only with the K-groups of $R$. 

If $R$ is coherent it is proved in \cite[Lema 6.4]{Swan} that $\operatorname{Nil}_{i}^{1}(R)=0$ using Devissage Theorem \ref{Qdev}. We are going to check that $\fp_{1}(R)\hookrightarrow\mathcal{N}^{*}_{1 } $ satisfies the hypothesis. The category $\mathcal{N}^{*}_{1}$ is abelian  because $\fp_{1}(R)$ is abelian when $R$ is coherent. We also have $\fp_{1}(R)$ is closed under subobjects, quotient objects and finite products in $\mathcal{N}^{*}_{1}$. For each object $[M,\alpha] \in \mathcal{N}^{*}_{1}$ we can filter $[M,\alpha]$ by $[M_{i},\alpha|_{M_{i}}]$ where $M_{i}=\alpha^{n-i}(M)$ and $n$ is the lowest natural number such that $ \alpha^{n}(M)=0$. The quotient $[M_{i},\alpha_{i}]/[M_{i-1},\alpha_{i-1}]$ is $[M_{i}/M_{i-1},\overline{\alpha}]$. As $\alpha(M_{i})=M_{i-1}$ then $\overline{\alpha}=0$. For this reason $[M_{i},\alpha_{i}]/[M_{i-1},\alpha_{i-1}] \in \fp_{1}(R)$. If $R$ is a regular coherent ring, by Proposition \ref{losnil} we have $\operatorname{Nil}_{i}(R)=\operatorname{Nil}^{1}_{i}(R)$, as we seen above $\operatorname{Nil}_{i}^{1}(R)=0$ and then $\operatorname{Nil}_{i}(R)=0$. It means that 
$$K_{i}(R[t])=K_{i}(R) \qquad \mbox{and}  \qquad  K_{i}(R[t,t^{-1}])=K_{i}(R)\oplus K_{i-1}(R)\qquad \mbox{for all $i\geq1$.} $$ 

If $R$ is a strong $n$-coherent and $n$-regular ring then by Proposition \ref{losnil} we have $\operatorname{Nil}_{i}^{n}(R) \simeq \operatorname{Nil}_{i}(R)$. We can not follow the steps of  the case $n=1$ because $\fp_{n}(R)$ is not necessarily  abelian. We prove in Proposition \ref{fpab}  that this only happens when $R$ is coherent. Although $\fp_{n}(R)$ is not abelian if $R$ is a non-coherent and strong $n$-coherent ring with $n\geq 2$, we have that $\fp_{n}(R)$ is a thick category. 
We expect that  $\operatorname{Nil}_{i}^{n}(R)$ are easier to handle than $\operatorname{Nil}_{i}(R)$.

Recall from \cite{hovey} that a full subcategory $\mathcal{C}$ of $R$-Mod is \emph{wide} if it is abelian and closed under extensions. Let $\mathcal{C}_{0}$ the wide subcategory generated by $R$. Observe $\mathcal{C}_{0}$ contains all finitely presented modules then $\fp_{1}(R)\subseteq \mathcal{C}_{0}$. By \cite[Lema 1.6]{hovey} a ring $R$ is coherent if and only if $\mathcal{C}_{0}=\fp_{1}(R)$.

\begin{prop}\label{fpab}
Let $n\geq1$. The category  $\fp_{n}(R)$ is abelian if, and only if $R$ is a coherent ring. 
\end{prop}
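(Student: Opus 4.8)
The plan is to prove both implications. The backward direction is essentially free: if $R$ is coherent, then $\fp_1(R)$ is abelian (this is classical, and is the $n=1$ case); moreover when $R$ is coherent, $\lambda(R)\le 1$, so $\fp_n(R)=\fp_1(R)$ for every $n\ge 1$ by Lemma~\ref{lemares} and the fact that $R$ strong $n$-coherent $\iff \lambda(R)\le n$. Hence $\fp_n(R)=\fp_1(R)$ is abelian. So all the work is in the forward direction.

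For the forward direction, I would argue by contraposition: assume $R$ is \emph{not} coherent and show $\fp_n(R)$ fails to be abelian. The key point is that being abelian as a full subcategory of $R$-$\operatorname{Mod}$ forces closure under kernels and cokernels of arbitrary morphisms between its objects. Since $\fp_n(R)\subseteq\fp_1(R)$, every module in $\fp_n(R)$ is finitely presented; if $\fp_n(R)$ were abelian, then for a map $f\colon M\to N$ in $\fp_n(R)$ the cokernel $N/\operatorname{Img}(f)$ computed in $R$-$\operatorname{Mod}$ would lie in $\fp_n(R)$, and similarly $\ker f$. Specializing to $M=R$: for any finitely $n$-presented cyclic-type submodule situation, one should be able to exhibit an ideal of $R$ that is finitely generated but not finitely presented (which exists precisely because $R$ is not coherent), realize it as the image of a map between free modules of finite rank — hence between objects of $\fp_n(R)=\fp_\infty(R)$ when $R$ is strong $n$-coherent — and observe that its being in $\fp_n(R)$ would make it finitely presented, a contradiction. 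I would lean on the $\mathcal C_0$ description recalled just before the statement: $\mathcal C_0$ is the wide (abelian, extension-closed) subcategory generated by $R$, it always contains $\fp_1(R)$, and $R$ is coherent $\iff \mathcal C_0=\fp_1(R)$ by \cite[Lemma 1.6]{hovey}. If $\fp_n(R)$ is abelian and closed under extensions (Proposition~\ref{excat}), it is wide; since it contains $R$, it contains $\mathcal C_0$; but $\fp_n(R)\subseteq\fp_1(R)\subseteq\mathcal C_0$, so $\fp_n(R)=\fp_1(R)=\mathcal C_0$, forcing $R$ coherent.

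So the cleanest route is: (i) recall $\fp_n(R)$ is closed under extensions (Proposition~\ref{excat}), so if it is abelian it is wide; (ii) note $R\in\fp_n(R)$, hence $\mathcal C_0\subseteq\fp_n(R)$ by minimality of $\mathcal C_0$ among wide subcategories containing $R$; (iii) combine with $\fp_n(R)\subseteq\fp_1(R)\subseteq\mathcal C_0$ to get $\fp_n(R)=\fp_1(R)=\mathcal C_0$; (iv) apply \cite[Lemma 1.6]{hovey} to conclude $R$ is coherent. Conversely, $R$ coherent gives $\lambda(R)\le1$, hence $\fp_n(R)=\fp_1(R)$, which is abelian.

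The main obstacle is step (ii)–(iii): one must be careful that ``the wide subcategory generated by $R$'' really is contained in every wide subcategory containing $R$, and that an abelian, extension-closed full subcategory of $R$-$\operatorname{Mod}$ is genuinely ``wide'' in Hovey's sense (i.e. that kernels and cokernels in the subcategory agree with those in $R$-$\operatorname{Mod}$, which holds because the inclusion is exact on the relevant short exact sequences). Once that bookkeeping is in place, everything collapses onto \cite[Lemma 1.6]{hovey}, and no delicate module-theoretic construction is needed. If one prefers a self-contained argument avoiding \cite{hovey}, the alternative is to directly build, from a finitely generated non-finitely-presented ideal $I\trianglelefteq R$ (which exists since $R$ is not coherent), a morphism $R^a\to R^b$ in $\fp_n(R)$ whose image is $I$ and whose cokernel $R^b/I$ is therefore forced into $\fp_n(R)\subseteq\fp_1(R)$, making $I$ finitely presented — a contradiction; this is more hands-on but requires knowing $I$ and $R^b/I$ sit in $\fp_n(R)$, which uses that $R$ is strong $n$-coherent so $\fp_n(R)=\fp_\infty(R)$ absorbs such cokernels.
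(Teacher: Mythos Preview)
Your proposal is correct and follows essentially the same route as the paper: the forward direction goes through Hovey's wide subcategory $\mathcal C_0$ generated by $R$, using $\mathcal C_0\subseteq\fp_n(R)\subseteq\fp_1(R)\subseteq\mathcal C_0$ together with \cite[Lemma~1.6]{hovey}, and the backward direction uses that coherence gives $\fp_1(R)=\fp_\infty(R)=\fp_n(R)$. You spell out the step $\mathcal C_0\subseteq\fp_n(R)$ (minimality of $\mathcal C_0$ among wide subcategories containing $R$, once $\fp_n(R)$ is seen to be wide) a bit more explicitly than the paper does, and you correctly flag the bookkeeping point that ``abelian'' here must mean that kernels and cokernels in $\fp_n(R)$ agree with those in $R$\textup{-Mod}; the paper leaves both of these implicit.
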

\begin{proof}
If $\fp_{n}(R)$ is abelian then $\mathcal{C}_{0}\subseteq \fp_{n}(R)\subseteq \fp_{1}(R)$. We obtain $\mathcal{C}_{0}= \fp_{1}(R)$ and then $R$ is coherent. 
Reciprocally, if $R$ is coherent then $\fp_{1}(R)$ is an abelian category. By \cite[Theorem 2.4]{bp} we obtain that $\fp_{1}(R)=\fp_{\infty}(R)=\fp_{n}(R)$. Therefore $\fp_{n}(R)$ is an abelian category.
\end{proof}

\section{Arithmetic and valutation rings.}\label{sec6}
In this section R is a commutative ring.
For coherent rings, the regularity condition is related to the weak dimension of $R$-modules. Examples of regular coherent rings includes Von Neumann regular  and semihereditary rings. Coherent rings with finite weak dimension are regular, however the converse is not necessarily true. 

Let us discuss the properties of $R$ depending only on its weak dimension.
If $w.dim(R)=0$ then $R$ is Von Neumann regular which also is coherent. If $w.dim(R)=1$ the coherence is not guaranteed, see \cite[Example 2.3]{Kabbaj}, but in this case $R$ is coherent if and only if $R$ is semihereditary (see \cite[Proposition 2.2]{Glaz2}). Finally  $w.dim(R)\leq1$ if and only if $R$ is an arithmetic reduced ring. 

Recall a ring $R$ is {\emph{arithmetic}} if the ideals of $R_{M}$ are totally ordered by inclusion for all maximal ideals $M$ of $R$. It is proved in \cite[Theorem 2.1]{Couchot} that arithmetic rings are strong 3-coherent and that reduced arithmetic rings are strong 2-coherent. 

Arithmetical condition is not equivalent to strong $n$-coherence. The Example \ref{ejemplo}  is strong $2$-coherent but is not arithmetical neither coherent, see \cite[Example 3.13] {Abuhlail}.

\begin{prop} Let $R$ be an arithmetic ring.
\begin{enumerate}
\item[1.] The ring $R$ is coherent if and only if the  annihilator of every  element is  finitely generated. In particular, if $w.dim(R)<\infty$ then $R$ is a supercoherent regular reduced ring.
\item[2.] If $R$ is 3-regular:
$$
K_{i}(R)\simeq K_{i}(\mbox{$3$-$\operatorname{Coh(R)}$})\simeq K_{i}(\fp_{3}(R))\qquad \qquad \forall i\geq 0.
$$
\item[3.] If $R$ is a reduced ring then: 
$$
K_{i}(R)\simeq K_{i}(\mbox{$2$-$\operatorname{Coh(R)}$})\simeq K_{i}(\fp_{2}(R))\qquad \qquad \forall i\geq 0.
$$
\item[4.]If $R$  has a Krull dimension $0$ and is 2-regular then:  
$$
\left[{Spec(R),\Z}\right] =K_{0}(R)\simeq K_{0}(\mbox{$2$-$\operatorname{Coh(R)}$})\simeq K_{0}(\fp_{2}(R)).
$$
\end{enumerate}
\end{prop}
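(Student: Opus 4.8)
The plan is to treat the four items as successive specializations of Theorem~\ref{fpnco1}, using the coherence classification of arithmetic rings recorded just before the statement. The only genuinely new input is item~(1); items~(2)--(4) are then essentially bookkeeping once one checks that the relevant regularity hypotheses are in force.

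First I would prove item~(1). By \cite[Theorem 2.1]{Couchot} an arithmetic ring is strong $3$-coherent, hence $\lambda(R)\le 3$, and coherence is exactly the condition $\lambda(R)\le 1$. For a commutative ring coherence is equivalent to the statement that every finitely generated ideal is finitely presented, and over a ring in which the ideals localize to chains a standard localization argument reduces this to: the ideal $(a)$ is finitely presented for every $a\in R$, which in turn is equivalent to $\mathrm{ann}(a)=\ker(R\xrightarrow{\cdot a}R)$ being finitely generated. This is the first assertion. For the ``in particular'' clause: if $w.dim(R)<\infty$ then in particular $w.dim(R)\le 1$ forces (by the dimension discussion preceding the statement, together with reducedness being automatic in that range) $R$ to be reduced arithmetic with $w.dim(R)\le 1$; a finite-weak-dimension coherent ring is regular, and one upgrades ``coherent'' to ``supercoherent'' (i.e.\ $\fp_\infty(R)$ behaves well under the operations that produce it, equivalently $\fp_1(R)=\fp_\infty(R)$ and this persists) using that $w.dim(R)\le 1$ makes every finitely generated ideal projective, so every finitely presented module is finitely $\infty$-presented; hence $R$ is a supercoherent regular reduced ring.

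Next, item~(2): an arithmetic ring is strong $3$-coherent by \cite[Theorem 2.1]{Couchot}, and $3$-regularity is assumed, so Theorem~\ref{fpnco1} with $n=3$ gives $K_i(R)\simeq K_i(\mbox{$3$-$\operatorname{Coh}(R)$})\simeq K_i(\fp_3(R))$ for all $i\ge 0$. Item~(3) is the same argument with $n=2$: a reduced arithmetic ring is strong $2$-coherent by \cite[Theorem 2.1]{Couchot}, and $2$-regularity must be supplied; here I would note that a reduced arithmetic ring has $w.dim(R)\le 1$, so every module has weak dimension $\le 1$, and for a coherent ring finite weak dimension gives regularity — but since we are not assuming coherence, I would instead invoke that $w.dim(R)\le 1$ together with strong $2$-coherence forces every finitely $2$-presented (hence finitely $\infty$-presented) module to have finite projective dimension by the usual argument comparing flat and projective dimension for modules admitting finite free resolutions, so $R$ is $2$-regular and Theorem~\ref{fpnco1} with $n=2$ applies. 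Item~(4) specializes item~(3)'s hypotheses (Krull dimension $0$, $2$-regular) and records only the degree-zero statement, using the classical identification $K_0(R)=[\operatorname{Spec}(R),\Z]$ for a commutative ring of Krull dimension $0$ (the continuous $\Z$-valued functions on the spectrum), which holds because such a ring has locally constant rank; then the isomorphisms $K_0(R)\simeq K_0(\mbox{$2$-$\operatorname{Coh}(R)$})\simeq K_0(\fp_2(R))$ are the $i=0$ case of item~(3).

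The main obstacle I expect is the first clause of item~(1): pinning down precisely why, over an arithmetic ring, coherence reduces to finite generation of $\mathrm{ann}(a)$ for single elements $a$ — one must use that finitely generated ideals localize to principal ideals at each maximal ideal (chains), handle the passage between local and global finite presentation carefully, and invoke that $R$ is already known to be strong $3$-coherent so finitely generated ideals are automatically $\fp_n$ for $n$ up to the presentation level one can bootstrap to. The subsidiary obstacle is making the implicit $2$-regularity in item~(3) rigorous without circularly assuming coherence; the cleanest route is the weak-dimension-one bound plus the fact that a module in $\fp_\infty(R)$ of finite flat dimension has finite projective dimension of the same value.
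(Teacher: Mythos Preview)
Your overall strategy matches the paper's: each item is an application of Theorem~\ref{fpnco1} once strong $n$-coherence and $n$-regularity are in hand, with the structural inputs on arithmetic rings drawn from Couchot. Items~(2) and~(3) are argued exactly as in the paper; for~(3) the paper cites \cite[Lemma~8]{cp} for the bound $pd(M)\le 1$ when $M\in\fp_2(R)$, which is precisely the flat-versus-projective-dimension comparison you sketch for modules in $\fp_\infty(R)$. For item~(1) the paper simply quotes \cite[1.4~Fact, Ch~XII]{Lombardi} for the annihilator characterization rather than reproving it via localization as you propose, and for the ``in particular'' clause it observes that an arithmetic coherent ring of finite weak dimension is semihereditary.

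There is a genuine gap in your treatment of item~(4). You present it as the $i=0$ case of item~(3), but item~(3) requires $R$ to be \emph{reduced}, whereas item~(4) assumes only Krull dimension~$0$ and $2$-regularity; a Krull-dimension-$0$ arithmetic ring need not be reduced. To proceed your way you would have to argue separately that reducedness follows from the hypotheses of~(4), which you do not do (and which is not obvious). The paper avoids this entirely: it invokes \cite[Corollary~2.7]{Couchot}, which shows that any arithmetic ring of Krull dimension~$0$ is strong $2$-coherent with no reducedness assumption; combined with the assumed $2$-regularity, Theorem~\ref{fpnco1} applies directly. The identification $K_0(R)=[\Spec(R),\Z]$ is Pierce's theorem \cite[Theorem~2.2.2]{Wei}, as you say.
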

\begin{proof}
\begin{enumerate}
\item[1.] By \cite[1.4 Fact, Ch XII, \textsection Arithmetic Rings ]{Lombardi} an arithmetic ring is coherent if and
only if the annihilator of every element is finitely generated. An arithmetical coherent ring with $w.dim(R)<\infty$ is a semiheditary ring.
\item[2.]It follows from Theorem \ref{fpnco1} and \cite[Theorem 2.1]{Couchot}.
\item[3.]If $R$ is an arithmetic reduced ring  then $w.dim(R)\leq1$ (see \cite[Theorem 4.2]{GlazRyan},\cite{Jensen}) and $pd(M)\leq1$ for all $M\in\fp_{2}(R)$ \cite[Lemma 8]{cp}.  
\item[4.] By Pierce's Theorem, \cite[Theorem 2.2.2]{Wei}, $\left[{Spec(R),\Z}\right] =K_{0}(R)$ for every $R$ with Krull dimension $0$. By \cite[Corollary 2.7]{Couchot} $R$ is a strong $2$-coherent ring.
\end{enumerate}
\end{proof}

Recall $R$ is arithmetic if $R$ is locally a valuation ring. 
A ring $R$ is a {\emph{valuation ring}} if  the set of ideals of $R$ is totally ordered by inclusion.

\begin{prop} Let $R$ be a valuation ring, $M$ its maximal ideal and $Z$ the subset of its zero divisors. 

\begin{enumerate}
\item[1.] If $Z=0$ then  $R$ is a coherent ring. 

\item[2.]If $Z\neq 0$, $Z\neq{M}$ and $R$ is $2$-regular then: 
$$
K_{i}(R)\simeq K_{i}(\mbox{$2$-$\operatorname{Coh(R)}$})\simeq K_{i}(\fp_{2}(R))\qquad \qquad \forall i\geq 0.
$$
$\fp_{\infty}(R) =\fp_{2}(R)\varsubsetneq\fp_{1}(R)$.
\item[3.] If $Z\neq 0$, $Z={M}$ and $R$ is $2$-regular then: 
$$
K_{i}(R)\simeq K_{i}(\mbox{$2$-$\operatorname{Coh(R)}$})\simeq K_{i}(\fp_{2}(R))\qquad \qquad \forall i\geq 0.
$$
$\fp_{\infty}(R)=\fp_{2}(R)\subseteq \fp_{1}(R)$.

\end{enumerate}
\end{prop}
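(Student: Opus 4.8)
The plan is to dispose of (1) directly, and to obtain (2) and (3) as instances of Theorem \ref{fpnco1} with $n=2$, once $R$ is known to be strong $2$-coherent; the comparison of the categories $\fp_{\infty}(R)$, $\fp_{2}(R)$ and $\fp_{1}(R)$ will then be read off from the value of $\lambda(R)$.

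For (1), I would observe that $Z=0$ means $R$ is a valuation domain, so every finitely generated ideal is principal and hence, being isomorphic to $R$ or to $0$, finitely presented; thus $R$ is coherent. (Equivalently, invoke the fact recalled in the previous proposition that an arithmetic ring is coherent if and only if the annihilator of every element is finitely generated, which is automatic for a domain.)

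For (2) and (3), I would first note that a valuation ring is arithmetic and that the hypothesis $Z\neq 0$ says precisely that $R$ is not a domain. The decisive structural input is Couchot's computation of the $\lambda$-dimension of valuation rings \cite{Couchot}: a non-domain valuation ring has $\lambda(R)\leq 2$, and $\lambda(R)=2$ when $Z\neq M$. Granting $\lambda(R)\leq 2$, the ring $R$ is strong $2$-coherent, so \cite[Theorem 2.4]{bp} yields $\fp_{2}(R)=\fp_{\infty}(R)$; combined with the hypothesis that $R$ is $2$-regular, Theorem \ref{fpnco1} applies with $n=2$ and gives $K_{i}(R)\simeq K_{i}(\mbox{$2$-$\operatorname{Coh(R)}$})\simeq K_{i}(\fp_{2}(R))$ for all $i\geq 0$. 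Finally, $\fp_{\infty}(R)=\fp_{2}(R)\subseteq\fp_{1}(R)$ always holds; when $Z\neq M$ the equality $\lambda(R)=2$ is exactly the assertion $\fp_{1}(R)\neq\fp_{\infty}(R)$, so the inclusion is strict, giving the refinement in (2), whereas when $Z=M$ one records only $\fp_{\infty}(R)=\fp_{2}(R)\subseteq\fp_{1}(R)$ (here $R$ may well be coherent, e.g. if it is Noetherian, so no strictness can be expected), which is (3). As an alternative to citing Couchot for the strictness in (2), one checks directly that $R$ is not coherent: assuming $R$ coherent, since $R$ is not a domain there is $a\neq 0$ with $0\neq\operatorname{Ann}(a)$, necessarily principal, say $\operatorname{Ann}(a)=dR$ with $d\neq 0$; choosing a non-unit non-zero-divisor $m\in M\setminus Z$ and comparing $m$ with $d$ in the chain of principal ideals, $d\mid m$ would give $m\in\operatorname{Ann}(a)\subseteq Z$, while $m\mid d$, say $d=ms$, would give $m(as)=0$, hence $as=0$, hence $s\in\operatorname{Ann}(a)=dR$, and a short chase forces $s=0$ and $d=0$ — both conclusions contradicting the choices made.

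The step I expect to be the main obstacle is the purely ring-theoretic one, namely $\lambda(R)\leq 2$ for non-domain valuation rings: the paper quotes only strong $3$-coherence for arbitrary arithmetic rings (\cite[Theorem 2.1]{Couchot}), so the sharper valuation-ring statement is what is needed. Once it is in hand, (2) and (3) are an immediate application of Theorem \ref{fpnco1}, i.e. of Quillen's Resolution Theorem for the inclusion $\operatorname{Proj}(R)\hookrightarrow\fp_{2}(R)$.
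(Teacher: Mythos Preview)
Your approach is correct and matches the paper's: the paper's proof is the one-line ``Straightforward from Theorem \ref{fpnco1} and \cite[Theorem 2.11]{Couchot}'', and \cite[Theorem 2.11]{Couchot} is precisely the valuation-ring $\lambda$-dimension computation you identify as the key input (it gives $\lambda(R)=0$ when $Z=0$, $\lambda(R)=2$ when $0\neq Z\neq M$, and $\lambda(R)\leq 2$ when $Z=M$). Your direct argument for (1) and your alternative non-coherence argument in (2) are correct extras, but the core route is the same.
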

\begin{proof}
Straightforward from Theorem \ref{fpnco1} and \cite[Theorem 2.11]{Couchot}. 
\end{proof}

\bibliographystyle{plain}

\begin{thebibliography}{10}



\bibitem{Abuhlail} Abuhlail, J.; Jarrar, M.; Kabbaj, S. 
\newblock{Commutative rings in which every finitely generated ideal is quasi-projective}
 \newblock{\em{J. Pure Appl. Algebra 215 (2011), no. 10, 2504–2511. }}
 \newblock{ }

 
\bibitem{Bass} Bass, Hyman.
\newblock{Finitistic dimension and a homological generalization of semi-primary rings. }
 \newblock{\em{ Trans. Amer. Math. Soc.  95 (1960), 466-488.}}
 \newblock{ }


 \bibitem{cp}Bravo, Daniel; Parra, Carlos E.
\newblock{ Torsion pairs over n-hereditary rings.}
 \newblock{\em{Comm. Algebra 47 (2019), no. 5, 1892–1907. }}
 \newblock{ }
 

\bibitem{bp} Bravo, Daniel; Pérez, Marco A.
\newblock{Finiteness conditions and cotorsion pairs. }
 \newblock{\em{ J. Pure Appl. Algebra 221 (2017), no. 6, 1249–1267. }}
 \newblock{ }

      
\bibitem{Costa} Costa, D. L.
\newblock{ Parameterizing families of non-Noetherian rings. }
 \newblock{\em{Comm. Algebra 22 (1994), no. 10, 3997–4011.} }
 \newblock{ } 

 
 \bibitem{Couchot}  Couchot, François.
\newblock{The $\lambda$-dimension of commutative arithmetic rings }
 \newblock{\em{Comm. Algebra 31 (2003), no. 7, 3143–3158. }}
 \newblock{ } 
 
\bibitem{ncoh} Dobbs, David E.; Kabbaj, Salah-Eddine; Mahdou, Najib.
 \newblock{n-coherent rings and modules. }
 \newblock{\em{Commutative ring theory (Fès, 1995), 269–281, }}
 \newblock{\em{ Lecture Notes in Pure and Appl. Math., 185, Dekker, New York, 1997.}}

\bibitem{Gersten}Gersten, S. M.
\newblock{K-theory of free rings.}
\newblock{\em Comm. Algebra 1 (1974), 39–64..}
\newblock{ }

 \bibitem{Glaz1} Glaz, Sarah. 
\newblock{Commutative coherent rings.}
 \newblock{\em{ Lecture Notes in Mathematics, 1371. Springer-Verlag, Berlin, 1989.}}
 \newblock{ }

\bibitem{Glaz2} Glaz, Sarah. 
\newblock{Finite conductor rings.}
\newblock{ \em Proc. Amer. Math. Soc. 129 (2001), no. 10, 2833–2843.}
  

 \bibitem{GlazRyan} Glaz, Sarah; Schwarz, Ryan.
 \newblock{Prüfer conditions in commutative rings}. 
 \newblock{\em{Arab. J. Sci. Eng. 36 (2011), no. 6, 967–983. }}
 
\bibitem{hovey} Hovey, Mark.
\newblock{Classifying subcategories of modules.}
\newblock{\em Trans. Amer. Math. Soc. 353 (2001), no. 8, 3181–3191.}
\newblock{ }

\bibitem{Jensen} Jensen, C. U.
 \newblock{Arithmetical rings.}
\newblock{\em{Acta Math. Acad. Sci. Hungar. 17 (1966), 115–123.}}

\bibitem{Kabbaj} Kabbaj, Salah-Eddine; Mahdou, Najib. 
\newblock{Trivial extensions defined by coherent-like conditions.}
\newblock{ \em{Comm. Algebra 32 (2004), no. 10, 3937–3953.}}
      
\bibitem{Lombardi} Lombardi, Henri; Quitté, Claude. 
\newblock{ Commutative algebra: constructive methods.}
 \newblock{\em{ Springer, Dordrecht, 2015. xlix+996 pp. }}
 \newblock{ }
 
  \bibitem{Mahdou} Mahdou, Najib.
\newblock{On Costa's conjecture. }
 \newblock{\em{  Comm. Algebra 29 (2001), no. 7, 2775–2785.}}
 \newblock{ } 


\bibitem{Quentel} Quentel, Yann. 
\newblock{Sur le théorème d'Auslaunder-Buchsbaum.}
 \newblock{\em C. R. Acad. Sci. Paris Sér. A-B 273 (1971), A880–A881.}
\newblock{}


\bibitem{Quillen} Quillen, Daniel.
\newblock{Higher algebraic K-theory: I }
 \newblock{\em Lecture Notes in Mathematics 341 pp. 85 -147,Springer-Verlag Berlin 1973.}
\newblock{}

\bibitem{ros}Rosenberg, Jonathan. 
\newblock{Algebraic K-theory and its applications.}
\newblock{\em Graduate Texts in Mathematics, 147. Springer-Verlag, New York, 1994. x+392 pp. }
\newblock{ }

\bibitem{Soublin} Soublin, Jean-Pierre.
\newblock{Un anneau cohérent dont l'anneau des polynômes n'est pas cohérent.}
 \newblock{\em{  C. R. Acad. Sci. Paris Sér. A-B 267 (1968), A241–A243.)}}
 \newblock{ }


\bibitem{Swan} Swan, Richard G.
\newblock{K-theory of coherent rings.}
\newblock{\em J. Algebra Appl. 18 (2019), no. 9, 1950161, 16 pp. }
\newblock{ }


 \bibitem{Vas} Vasconcelos, Wolmer V.
\newblock{ The rings of the dimension two.}
 \newblock{\em{ Lecture Notes in Pure and Applied Mathematics, Vol. 22. Marcel Dekker, Inc., New York-Basel, 1976.}} \newblock{ }
 
%1
 \bibitem{Wei} Weibel, Charles A.
\newblock{The K-book.
An introduction to algebraic K-theory. }
\newblock{\em Graduate Studies in Mathematics, 145. American Mathematical Society, Providence, RI, 2013.}


 
\bibitem{Zhu} Zhu, Z.
\newblock{On $n$-coherent rings, $n$-hereditary rings and $n$-regular rings}
\newblock{\em}
\newblock{Bull. Iranian Math. Soc. 37 (2011), no. 4, 251–267. }


\end{thebibliography}

\end{document}